\documentclass[11pt]{amsart}
\usepackage{amsfonts,amsmath,amssymb,fullpage,graphicx, comment}
\usepackage{verbatim}
\usepackage{xcolor}
\usepackage{hyperref}

\usepackage{array}
\usepackage{booktabs}

\usepackage{mathrsfs}

\newtheorem{thm}{Theorem}[section]

\newtheorem{lem}[thm]{Lemma}

\newtheorem{prop}[thm]{Proposition}
\theoremstyle{definition}

\theoremstyle{remark}
\newtheorem{rem}[thm]{\bf{Remark}}
\numberwithin{equation}{section}

\newcommand{\beas}{\begin{eqnarray*}}
\newcommand{\eeas}{\end{eqnarray*}}
\newcommand{\bes} {\begin{equation*}}
\newcommand{\ees} {\end{equation*}}
\newcommand{\be} {\begin{equation}}
\newcommand{\ee} {\end{equation}}
\newcommand{\bea} {\begin{eqnarray}}
\newcommand{\eea} {\end{eqnarray}}

\title{Extreme Values of Quadratic Dirichlet $L$-functions over Monic Irreducible Polynomials in $\mathbb{F}_q[t]$}

\begin{document}

\author{Ahammad Mostafa Hossain}
	\address{Ahammad Mostafa Hossain\\ Department of Mathematics \\
		Indian Institute of Technology Kharagpur \\
			Kharagpur-721302,  India.} 
	\email{mostafa.24@kgpian.iitkgp.ac.in}

\subjclass[2020]{Primary 11R59; Secondary 11T06, 11G20} 
\keywords{Quadratic Dirichlet $L$-functions, Function fields, Extreme values, Finite fields.}

\begin{abstract}
In this paper, we establish a new $\Omega$-result for the central values $|L(1/2,\chi_P)|$ of quadratic Dirichlet $L$-functions, where $P$ ranges over monic irreducible polynomials associated with hyperelliptic curves of genus $g$ over a fixed finite field $\mathbb{F}_q$. We consider the asymptotic setting in which $q$ is fixed and $g\to\infty$. More precisely, for every $\epsilon \in (0,1/2)$, we prove that
\[
    \max_{P \in \mathcal{P}_{2g+1}} |L(1/2, \chi_P)| 
    \gg \exp \left( \left( \sqrt{\frac{\sqrt{q}+1}{\sqrt{q}-1} (1/2-\epsilon)} \, \, \ln q + o(1) \right) 
\sqrt{\frac{g \ln_2 g}{\ln g}} \right),
    \]
where $\mathcal{P}_{2g+1}$ is the set of all monic irreducible polynomials of degree $ 2g+1$ in $\mathbb{F}_q[t]$. Our result extends the recent work of Darbar and Maiti (2024) and yields an improved lower bound for the extreme values in this family. We also investigate the extreme values of these quadratic $L$-functions near the central line. In addition, for $1/2<\sigma<1$ and sufficiently large $n$, we study the
extreme values of $L(\sigma,\chi_P)$, where $P\in\mathcal{P}_n$, and obtain
an improved lower bound compared with the result of Lumley (2021).

\end{abstract}

\maketitle
\section{Introduction}

A fundamental challenge in the theory of $L$-functions is to establish sharp lower bounds for their extreme values. Considerable progress has been achieved in this direction in recent years, with important advances concerning the Riemann zeta function and families of Dirichlet $L$-functions in the conductor aspect.

We adopt the convention that, for each integer $j \geq 2$, the symbol $\ln_j$ represents the $j$-fold iterated natural logarithm, while $\log_j$ denotes the corresponding $j$-fold iterated logarithm with base $q$, where $q \geq 3$ is fixed. 
Soundararajan \cite{soundararajan2008} developed the resonance method to study
extreme values of several families of $L$-functions, including the Riemann zeta
function and quadratic Dirichlet $L$-functions. This method has since become an
important tool in the study of large central values of $L$-functions.
De la Bret\`eche and Tenenbaum \cite{breteche2019} further developed this
approach and proved that
\begin{equation}\label{tene re 2}
\max_{\substack{\chi \in X_q^+ \\ \chi \neq \chi_0}} |L(1/2, \chi)| 
\ge \exp \left( (1+o(1)) \sqrt{\frac{\ln q \, \ln_3 q}{\ln_2 q}} \right),
\end{equation}
where $X_q^+$ denotes the family of even Dirichlet characters modulo $q$.
In the present work, we use this method to investigate large central values of
quadratic Dirichlet $L$-functions over function fields.

Over the past ten years, alongside the progress in number fields, there has been growing interest in understanding the behavior of such functions in the function field setting. In a recent work, Doki\'c et al. \cite{dokic2020} established analogues of \eqref{tene re 2} and the main results from \cite{aist2019} for rational function fields over finite fields. More specifically, they studied the occurrence of large values of $L(\sigma,\chi)$ in the function field setting, allowing $\chi$ to range over Dirichlet characters modulo a polynomial $P \in \mathbb{F}_q[t]$, with $\sigma \in [1/2,1]$. When $P$ is a monic irreducible polynomial with $|P|=q^{\mathbf{d}(P)}$, 
their argument based on estimates for the full GCD sum gives the following asymptotic lower bound as $\mathbf{d}(P)\to\infty$:
\[
\max_{\substack{\chi\pmod P \\ \chi \text{ even} \\ \chi \neq \chi_0}} 
|L(1/2, \chi)| 
\ge \exp \left( \left( \frac{1}{2} \sqrt{\frac{\sqrt{q} + 1}{\sqrt{q} - 1} \ln q} + o(1) \right) 
\sqrt{\frac{\ln |P| \, \ln_3 |P|}{\ln_2 |P|}} \right).
\]

\subsection{Extreme  values of Dirichlet \texorpdfstring{$L$}{L}-functions over irreducible polynomials.}

The central goal of this paper is to investigate the function field analogue of 
the quadratic Dirichlet $L$-functions. We 
begin by recalling some essential notation from the theory of function fields. Fix an odd prime power $q$, and write $\mathbb{F}_q$ for the finite field
with $q$ elements. Throughout, we work in the polynomial ring
$\mathbb{F}_q[t]$. For each monic irreducible polynomial
$P\in\mathbb{F}_q[t]$, let $\chi_P(A):=\left(\frac{A}{P}\right) $  denote the quadratic character modulo $P$, defined through the quadratic
residue symbol in $\mathbb{F}_q[t]$.
 For
$A \in \mathbb{F}_q[t]$, the quadratic residue symbol modulo $P$ is defined by
\[
\left(\frac{A}{P}\right) =
\begin{cases}
	1, & \text{if } A \text{ is a square}\pmod{P}, \quad P \nmid  A,\\
	-1, & \text{if }  A \text{ is not a square}\pmod{P}, \quad P \nmid  A,\\
	0, & \text{if } P \mid  A.
\end{cases}
\]
We then define the Dirichlet $L$-function attached to $\chi_P$ by
\[
L(s,\chi_P) = \sum_{A\in \mathcal{M}} \frac{\chi_P(A)}{|A|^s}
, \qquad \Re(s) > 1.
\] 
 Let $\mathcal{P}_n$ denote the set of all monic irreducible polynomials
of degree $n$ in $\mathbb{F}_q[t]$. For each $P\in\mathcal{P}_n$, we consider the hyperelliptic curve given in affine form by
\[
C_P:y^2=P(t).
\]
The corresponding curve is nonsingular, and its genus $g$ is determined by
\[
2g=n-\lambda -1,
\]
where
\[
\lambda=
\begin{cases}
	0, & \text{if } n \text{ is odd},\\
	1, & \text{if } n \text{ is even}.
\end{cases}
\]

Darbar and Maiti \cite{darbar2024} investigated the behavior of the central values
$L(1/2,\chi_P)$ as $P$ ranges over $\mathcal{P}_{2g+1}$, where $g$ denotes
the genus of the corresponding hyperelliptic curve. They proved
that, for every $\epsilon\in(0,1/2)$,
\[
\max_{P \in \mathcal{P}_{2g+1}} |L(1/2, \chi_P)| 
\gg \exp \left( \left( \sqrt{(1/2 - \epsilon) \ln q} + o(1) \right) 
\sqrt{\frac{g \ln_2 g}{\ln g}} \right),
\]
as $g \to \infty$.

We improve the above result of Darbar and Maiti, and obtain the following theorem.

\begin{thm}\label{main thm}
   For $q$ be a fixed odd prime power and any $\epsilon\in(0,1/2)$, we obtain, as $g\to\infty$,
    \[
    \max_{P \in \mathcal{P}_{2g+1}} |L(1/2, \chi_P)| 
    \gg \exp \left( \left( \sqrt{\frac{\sqrt{q}+1}{\sqrt{q}-1} (1/2-\epsilon)} \, \,\ln q + o(1) \right) 
\sqrt{\frac{g \ln_2 g}{\ln g}} \right).
    \]
\end{thm}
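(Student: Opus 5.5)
We will use the resonance method in the form of de la Bret\`eche--Tenenbaum, as adapted to $\mathbb{F}_q[t]$ by Doki\'c et al. The improvement over Darbar and Maiti will come from replacing a completely multiplicative resonator supported on a restricted set of primes with a GCD-type resonator. We then estimate the resulting GCD sum using the sharper function-field bound carrying the factor $\frac{\sqrt q+1}{\sqrt q-1}$.

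\textbf{Setup.} Take a resonator $R(P)=\sum_{F\in\mathcal{M}} r(F)\chi_P(F)$. Here $r$ is multiplicative and supported on square-free monic $F$ with $|F|\le q^{\theta g}$, where $\theta=1-2\epsilon$, and
\[
r(Q)=\frac{a}{\sqrt{|Q|}\,\deg Q}
\]
for irreducible $Q$ with $\log_2 g\ll \deg Q\ll \log g\,\ln_2 g$ (in the appropriate range). The parameter $a\asymp\sqrt{\ln g\,\ln_2 g}$ is tuned later. We compare
\[
S_1=\sum_{P\in\mathcal{P}_{2g+1}}|R(P)|^2,\qquad S_2=\sum_{P\in\mathcal{P}_{2g+1}} L(1/2,\chi_P)\,|R(P)|^2,
\]
and use $\max|L(1/2,\chi_P)|\ge |S_2|/S_1$.

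\textbf{Computing $S_1$ and $S_2$.} For $S_2$, insert the approximate functional equation for $L(1/2,\chi_P)$, which for $\deg P=2g+1$ is an exact finite sum of length $q^{g}$ plus a dual sum. Expanding $|R(P)|^2$ gives sums of $\chi_P(AF_1F_2)$ over $P\in\mathcal{P}_{2g+1}$. Quadratic reciprocity and the Riemann hypothesis for function fields (the Weil bound) show that $\sum_{P}\chi_P(M)$ equals $|\mathcal{P}_{2g+1}|$ times an indicator that $M$ is a square, up to an error $O(q^{g+1/2}\deg M)$. The condition $\theta<1$, so that $|F_1F_2A|$ stays below $q^{(2g+1)(1-\epsilon')}$, makes all error terms negligible; this is exactly where $\epsilon$ enters. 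The main terms are then
\[
S_1\approx|\mathcal{P}_{2g+1}|\sum_{F_1F_2=\square} r(F_1)r(F_2),\qquad
S_2\approx|\mathcal{P}_{2g+1}|\sum_{A F_1F_2=\square}\frac{r(F_1)r(F_2)}{\sqrt{|A|}}.
\]
Since $r$ is supported on square-free $F$, the condition $F_1F_2=\square$ forces $F_1=F_2$. Writing $AF_1F_2=\square$ in terms of $D=\gcd(F_1,F_2)$, the ratio $S_2/S_1$ is bounded below by a GCD sum of the form
\[
\frac{1}{\sum_F r(F)^2}\sum_{F_1,F_2}r(F_1)r(F_2)\frac{\sqrt{|\gcd(F_1,F_2)|}}{\sqrt{|\mathrm{lcm}(F_1,F_2)|}}.
\]
We also drop the contribution of non-square-free parts of $A$, which are positive.

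\textbf{Optimizing the GCD sum.} We evaluate the GCD sum through the Euler product over irreducible $Q$. The local factor for $Q$ is $1+2r(Q)/\sqrt{|Q|}\cdot|Q|^{1/2}\cdots$, which reduces the ratio to
\[
\exp\Bigl(\sum_Q \frac{r(Q)}{\sqrt{|Q|}}(1+o(1))\Bigr).
\]
We then use Rankin's trick to handle the truncation $|F|\le q^{\theta g}$. The key arithmetic input is the prime polynomial theorem $\#\{Q:\deg Q=k\}=q^k/k+O(q^{k/2}/k)$. Summing the contributions degree by degree, with geometric weights in $q^{-k/2}$ arising from $\gcd$ interactions across degrees, yields the constant $\frac{\sqrt q+1}{\sqrt q-1}$, as in the Doki\'c et al.\ full GCD-sum estimate. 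Choosing $a=\sqrt{\theta\,\frac{\sqrt q+1}{\sqrt q-1}}\,\ln q\sqrt{\ln g\ln_2 g}\,(1+o(1))$ balances the Rankin loss against the gain and produces the exponent $\sqrt{\frac{\sqrt q+1}{\sqrt q-1}(1/2-\epsilon)}\ln q\sqrt{g\ln_2g/\ln g}$. The factor $1/2$ appears because the length available is $\theta g$ relative to the scale $2g$.

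\textbf{Main obstacle.} The hardest step is the last one: obtaining the precise GCD-sum lower bound with the sharp constant $\frac{\sqrt q+1}{\sqrt q-1}$ while the resonator is truncated to $|F|\le q^{\theta g}$. The truncation is forced by the need to control the character sums over irreducible $P$. I would first try to import the Doki\'c et al.\ GCD-sum construction directly, using their resonator coefficients, and verify that the truncation loss is $o(\cdot)$ via Rankin's bound with exponent $\alpha\asymp 1/\ln g$.
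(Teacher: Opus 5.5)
Your outer frame matches the paper: the ratio $\mathcal{S}_2/\mathcal{S}_1$, the Weil bound for non-square $KMN$, squarefree support so that $MN=\square$ forces $M=N$, and the reduction of $\mathcal{S}_2$ to a G\'al sum. However, the resonator you actually propose cannot give the theorem.

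Multiplicative weights $r(Q)=a/(\sqrt{|Q|}\,\mathbf{d}(Q))$ on $|F|\le q^{\theta g}$, with the truncation handled by Rankin's trick, are Soundararajan's short resonator. The untruncated G\'al-sum ratio is $\prod_Q\bigl(1+2r(Q)|Q|^{-1/2}/(1+r(Q)^2)\bigr)$. Balancing this against the Rankin loss gives an exponent of order $\sqrt{g/\ln g}$. It lacks the factor $\sqrt{\ln_2 g}$ (the $\ln_3\mathcal{N}$ in $\sqrt{\ln\mathcal{N}\ln_3\mathcal{N}/\ln_2\mathcal{N}}$), so it is weaker even than Darbar--Maiti.

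With your stated parameters the bound is smaller still. Over your range $\mathbf{d}(Q)\gg\log_2 g$, $\sum_Q r(Q)|Q|^{-1/2}=a\sum_Q 1/(|Q|\,\mathbf{d}(Q))\ll a/\log_2 g$ with $a\asymp\sqrt{\ln g\ln_2 g}$. Nothing in this computation produces $\frac{\sqrt q+1}{\sqrt q-1}$, so your final choice of $a$ and the resulting exponent are asserted, not derived.

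Your fallback, importing the Doki\'c et al.\ set and controlling the truncation by Rankin, also fails. Since $P_k/J_k\to\infty$, almost every element of that set has close to $J_k$ prime factors in each block. Its degree is therefore of order $\ln\mathcal{N}\ln_2\mathcal{N}/\ln_3\mathcal{N}\gg g$. The cutoff $|F|\le q^{\theta g}$ discards essentially the whole set, and no Rankin-type bound can make that loss negligible. Separately, with $\theta=1-2\epsilon$ the product $|F_1F_2A|$ can reach $q^{(3-4\epsilon)g}$, which exceeds $q^{2g+1}$ for small $\epsilon$. So your stated error-term condition is not met either.

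The missing idea is that no size truncation is needed. By Lemma~\ref{weil} the non-square terms cost $\ll q^{3g/2}|\mathfrak{R}|^2\,(g+\max_{H\in\mathfrak{R}}\mathbf{d}(H))$, which depends on degrees only linearly. The main terms are at least of size $|\mathcal{P}_{2g+1}|\,|\mathfrak{R}|\asymp q^{2g}|\mathfrak{R}|/g$. Hence only the cardinality matters, and $|\mathfrak{R}|\le\mathcal{N}\asymp q^{g(1/2-\epsilon')}$ suffices. This is where $1/2-\epsilon$ enters, through $\ln\mathcal{N}=(1/2-\epsilon')g\ln q$.

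The paper's proof proceeds as follows.
\begin{itemize}
\item It uses the unweighted resonator $\mathcal{R}_P=\sum_{H\in\mathfrak{R}}\chi_P(H)$ over a de la Bret\`eche--Tenenbaum-type set of squarefree polynomials: primes are grouped by degree into blocks $\mathcal{I}_k$, and at most $J_k$ primes are taken from each block.
\item Squarefreeness gives $\mathcal{S}_1\le(1+o(1))\mathcal{N}|\mathcal{P}_{2g+1}|$.
\item Keeping only $K=[M,N]/(M,N)$ bounds the main term of $\mathcal{S}_2$ below by $|\mathcal{P}_{2g+1}|\,S(\mathfrak{R})$.
\item Proposition~\ref{prop} then supplies the G\'al-sum bound. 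There $\frac{\sqrt q+1}{\sqrt q-1}=\mathcal{Z}(3/2)^2/\mathcal{Z}(2)$ comes from comparing $\bigl(\sum_{P\in\mathcal{I}_k}|P|^{-1/2}\bigr)^2$ with the number $P_k$ of primes in $\mathcal{I}_k$.
\end{itemize}

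If you adopt this route, remember that $K$ must still satisfy $\mathbf{d}(K)\le g$, even though elements of $\mathfrak{R}$ have degree far beyond $g$. This holds in the configurations that drive Proposition~\ref{prop}, namely $M=DE_1$, $N=DE_2$ with $\omega(E_i)=j_k$ in block $k$. There $\mathbf{d}(E_1E_2)\ll\sqrt{\ln\mathcal{N}\ln_2\mathcal{N}\ln_3\mathcal{N}}=o(g)$.
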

We now further investigate the extreme values of the above family of $L$-functions close to  the central point. Inspired by the work of  Chirre \cite{chirre2019}, Dong et al. \cite{dong2025}  obtained conditional results concerning large values of quadratic Dirichlet $L$-functions in a suitable range of  the central point over number fields. Now, we apply their method to the analogous problem over function fields and study the extreme values of
\[
L(\sigma,\chi_P),
\qquad P\in\mathcal{P}_{2g+1},
\]
when $\sigma$ approaches the central point $1/2$ in a suitable range. The corresponding result is stated in the following theorem.
\begin{thm}\label{main rel near central}
  Let $q$ be a fixed odd prime power and any positive real number $ \kappa $. Set 
    \[
    \sigma= \frac12+\frac{\kappa}{\ln g}.
    \]
    For any $\epsilon\in (0,1/2)$,  we obtain, as $g \to \infty$, 
    \begin{equation}\label{large near central line}
    \max_{P\in\mathcal{P}_{2g+1}}
\left|
L\left(\sigma,\chi_P\right)
\right|
\geq \exp\left( (C(\kappa)+o(1)) \, \sqrt{ \frac{g \,\ln_2 g}{\ln g} } \right),
   \end{equation}
    where 
    \[
    C(\kappa)=e^{-3\kappa} \sqrt{\left (\frac{1}{2}-\epsilon\right) \ln q}.
    \]
\end{thm}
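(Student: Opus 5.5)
We prove Theorem \ref{main rel near central} with the resonance method of Soundararajan in the form used by Darbar and Maiti, adapted as in Chirre and Dong et al.\ to an abscissa slightly to the right of $1/2$. Set $N=\#\mathcal{P}_{2g+1}\asymp q^{2g+1}/(2g+1)$. Let $R(P)=\sum_{A\in\mathcal{M},\,\deg A\le \ell} r(A)\chi_P(A)$ be a resonator with $r$ multiplicative and supported on squarefree $A$ whose prime factors $Q$ satisfy $L_0\le\deg Q\le L_1$. For such $Q$ put
\[
r(Q)=\frac{a}{|Q|^{1/2}\deg Q},
\]
where
\[
a=\sqrt{\left(\tfrac12-\epsilon\right)\ln q}\,\sqrt{\frac{g\ln_2 g}{\ln g}},
\qquad \deg Q\asymp \log g .
\]
The length satisfies $q^{\ell}\le q^{(2-\epsilon')g}$, the natural restriction that comes from the harmonic analysis over $\mathcal{P}_{2g+1}$. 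We then use
\[
\max_{P}|L(\sigma,\chi_P)|\ \ge\ \frac{\left|\sum_{P\in\mathcal{P}_{2g+1}}L(\sigma,\chi_P)R(P)^2\right|}{\sum_{P\in\mathcal{P}_{2g+1}}R(P)^2}.
\]

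\emph{Step 1 (approximate functional equation).} For $P\in\mathcal{P}_{2g+1}$, $L(s,\chi_P)$ is a polynomial of degree $2g$ in $u=q^{-s}$. This gives the exact formula
\[
L(\sigma,\chi_P)=\sum_{\deg A\le g}\frac{\chi_P(A)}{|A|^{\sigma}}+q^{g(1-2\sigma)}\sum_{\deg A\le g-1}\frac{\chi_P(A)}{|A|^{1-\sigma}}.
\]
With $\sigma=1/2+\kappa/\ln g$, the factor $q^{g(1-2\sigma)}=\exp(-2\kappa g\ln q/\ln g)$ is tiny but still multiplies a term whose size may be large. We keep both terms and bound the dual one in absolute value, or else show that it contributes positively.

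\emph{Step 2 (orthogonality).} We evaluate the sums over $P$ of $\chi_P(A)\chi_P(B_1)\chi_P(B_2)$ using quadratic reciprocity and the prime polynomial theorem with the Riemann hypothesis (Weil). The product $AB_1B_2$ must be a square to give a main term. Off-diagonal terms contribute $O(q^{g}\cdot(\text{length}))$, and these are negligible because of the length constraint on $R$.

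The denominator is then $N\prod_Q(1+r(Q)^2)$. The numerator is
\[
N\sum_{\deg A\le g} \frac{1}{|A|^{\sigma}}\sum_{B_1B_2=A\square}r(B_1)r(B_2).
\]
After restricting to $A$ supported on the resonator primes (all terms are nonnegative), this is at least
\[
N\prod_Q\left(1+r(Q)^2+\frac{2r(Q)}{|Q|^{\sigma}}\right),
\]
up to truncation errors. Rankin's trick handles the truncation $\deg A\le g$, using $\deg Q\asymp\log g$ and the size of $a$.

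\emph{Step 3 (the ratio).} The ratio is
\[
\exp\left(\sum_Q\frac{2r(Q)|Q|^{-\sigma}}{1+r(Q)^2}\right)\approx\exp\left(\sum_{L_0\le\deg Q\le L_1}\frac{2a}{|Q|^{1/2+\sigma}\deg Q}\right).
\]
Write $|Q|^{-(\sigma-1/2)}=\exp(-\kappa\deg Q\,\ln q/\ln g)$. With $\deg Q\le L_1\approx 3\log g$ in base $q$, so that $\deg Q\ln q\le 3\ln g$, this factor is at least $e^{-3\kappa}$. The prime polynomial theorem gives
\[
\sum_{\deg Q=k}|Q|^{-1}\approx\frac1k,
\qquad
\sum_{L_0\le k\le L_1}\frac{2a}{k^2}\approx\frac{2a}{L_0}.
\]
Choosing $L_0\approx\log g\cdot$ (a suitable factor) balances this against the constraint $\sum_Q r(Q)^2\deg Q\lesssim\ell$. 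This reproduces the Darbar--Maiti constant $\sqrt{(1/2-\epsilon)\ln q}$ multiplied by $e^{-3\kappa}$, so the exponent is $C(\kappa)\sqrt{g\ln_2 g/\ln g}$.

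\emph{Main obstacle.} The hardest part will be controlling the dual term in Step 1 together with the tail/truncation in Step 2. Away from $\sigma=1/2$ the two terms in the functional equation are no longer symmetric, and $|A|^{-(1-\sigma)}$ weights larger terms. I would first try to show the dual contribution is also nonnegative after the restriction to squares, so it can simply be dropped. Failing that, I would bound it by Rankin's trick, using the decay $q^{g(1-2\sigma)}$ against the $\exp(O(a))$ growth of the resonator sums; this works because $a=o(g/\ln g)$. The loss from $e^{-3\kappa}$ rather than a sharper constant reflects this crude handling of $|Q|^{-\kappa/\ln g}$ over the range $\deg Q\le 3\log g$.
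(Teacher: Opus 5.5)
\textbf{The gap is the resonator.} Your $R(P)$ is a short, Soundararajan-type resonator: multiplicative weights $r(Q)=a/(|Q|^{1/2}\deg Q)$ on primes of degree $\asymp\log g$, with every term of degree $\le\ell$. A resonator of this kind cannot produce the factor $\sqrt{\ln_2 g}$ in the exponent, and your own Step~3 shows it. There $\sum_Q 2r(Q)|Q|^{-\sigma}\approx\sum_{k\ge L_0}2a/k^2\approx 2a/L_0$. For your $a\asymp\sqrt{g\ln_2 g/\ln g}$ and $L_0\asymp\log g$ this is of size $\sqrt{g\ln_2 g}/(\ln g)^{3/2}$, far below $C(\kappa)\sqrt{g\ln_2 g/\ln g}$. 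Retuning the parameters does not help. Rankin's trick for the truncation $\deg A\le\ell$ requires $\sum_Q r(Q)^2\deg Q\approx a^2/L_0\lesssim\ell\ll g$. Keeping $r(Q)\lesssim 1$ (a larger $r(Q)$ only shrinks $2r(Q)/(1+r(Q)^2)$) forces $L_0\gg\log g$. Together these give an exponent of at most $2a/L_0\lesssim 2\sqrt{\ell/L_0}\ll\sqrt{g/\ln g}$, which is the usual ceiling for resonators of bounded length. So the claim that Step~3 ``reproduces the Darbar--Maiti constant'' is unsupported: that constant comes from a long resonator. Your derivation of $e^{-3\kappa}$ from a cutoff $\deg Q\le 3\log g$ is likewise read off from the statement rather than forced by the argument.

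\textbf{What the paper does instead.} It uses a long resonator in the style of Bondarenko--Seip, Chirre and Darbar--Maiti. The weights are $\psi(P)\asymp\sqrt{\ln\mathcal N\ln_2\mathcal N/\ln_3\mathcal N}\,|P|^{-\sigma}\bigl(\mathbf d(P)-\log(\ln\mathcal N\ln_2\mathcal N)\bigr)^{-1}$, on primes whose degree exceeds $\log(\ln\mathcal N\ln_2\mathcal N)$ by between $1$ and $(\ln_2\mathcal N)^{\gamma'}$. The support is pruned by capping the number of prime factors in each block $\mathbb P_k$, so that the \emph{number} of terms satisfies $|\mathcal R|\le\mathcal N\asymp q^{g(1/2-\epsilon')}$. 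Individual terms may still have degree far exceeding $g$ (up to $\ll\ln\mathcal N\ln_2\mathcal N$). This is allowed because the degree enters the Weil bound only linearly. By Cauchy--Schwarz the non-square contribution is $\ll q^{3g/2}\mathcal N(g+\ln\mathcal N\ln_2\mathcal N)\sum_H\psi(H)^2$, so it is controlled by cardinality, not length.

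The same computation shows your condition $q^{\ell}\le q^{(2-\epsilon')g}$ is wrong. Against a main term of size $q^{2g}/g$, an error of size $q^{g}\cdot q^{g/2}\cdot\#\{\text{terms}\}$ allows at most $q^{(1/2-\epsilon)g}$ terms. This is exactly where the $1/2-\epsilon$ in $C(\kappa)$ comes from.

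The remaining ingredients of the paper's proof are as follows. The shifted denominator in $\psi(P)$ produces the harmonic sum $\sum_{j\le(\ln_2\mathcal N)^{\gamma'}}1/j\approx\gamma'\ln_3\mathcal N$, and this is the source of the extra $\ln_2 g$. The truncation $\deg K\le g$ is handled by keeping only $K=N/M$ with $M\mid N$ and $\mathbf d(M)\ge\mathbf d(N)-\epsilon g$, then showing the discarded part is $o(\mathcal A_{\mathcal N})$. The constant $e^{-3\kappa}$ is $e^{-2\kappa}$, from $q^{(1-2\sigma)\mathbf d(P)}$ on this prime range, times $e^{-\kappa}$, from $(\ln\mathcal N)^{1-\sigma}=e^{-\kappa}(\ln\mathcal N)^{1/2}$.

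Finally, the dual term you single out as the main obstacle is harmless. Its square part is nonnegative and can be dropped. Its non-square part is bounded via Weil by $q^{g(1-2\sigma)}\cdot q^{g}\cdot q^{g\sigma}=q^{g(2-\sigma)}$ times the same resonator factor, which is the same size as for the first term.
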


\begin{rem}
Similar to \cite[Theorem~1]{darbar2024}, our result \eqref{large near central line}  demonstrates that, whenever
$0<\sigma-1/2\ll 1/\ln g$,
the extreme values of $L(\sigma,\chi_P)$ have the same order of magnitude as the central values $L(1/2,\chi_P)$ as $g\to\infty$, with only a slightly smaller leading constant in the exponent.
If we additionally allow $\kappa=0$, then
$\sigma=1/2$, and Theorem~\ref{large near central line} reduces to  \cite [Theorem~1]{darbar2024}. Simultaneously, the above result was independently proved and improved in \cite[Theorem~1.3]{dong2026}, with the coefficient
$
C(\kappa)=e^{-\kappa}\sqrt{\ln q}.
$
\end{rem}

\begin{rem}
	For $P\in\mathcal{P}_{2g+2}$, it seems difficult to obtain results of
	the same strength as Theorems~\ref{main thm} and~\ref{main rel near central} by the present method.
	Indeed, in the even-degree case we have $\lambda=1$, and the
	approximate functional equation contains additional $\lambda$-terms (see \cite[Lemma 7]{darbar2024}).
	These extra terms do not preserve the positivity used in our
	resonator argument. Therefore, our present approach does not directly
	give the same lower bounds for the family
	$\{L(\sigma,\chi_P):P\in\mathcal{P}_{2g+2}\}$.
\end{rem}

We next investigate the extreme values of 
 $L(\sigma,\chi_P)$ inside the critical strip for $P \in \mathcal{P}_{n}$.
 Lumley \cite[Theorem~1.3]{lumley2021} proved that, for $q\equiv 1 \pmod{4}$, fixed $1/2<\sigma<1$, and sufficiently large $\mathbf{N}$, there exists a monic irreducible polynomial $P$ of degree $\mathbf{N}$ such that 
\begin{equation}\label{lumley rel}
	\ln L(\sigma,\chi_P) \geq  \beta'_q(\sigma)\frac{ ( \log |P|)^{1-\sigma}}{(\log \log |P|)^{\sigma}},
\end{equation}
where
\[
\beta'_q(\sigma) = \frac{\mathcal{Z}(2-\sigma)}{(10 q \mathcal{Z}(2))^{1-\sigma}}.
\]
Here, $|P|$ and $\mathcal{Z}(s)$ are as defined in Section~2. 

Using the method of Darbar and Maiti~\cite{darbar2025large} for studying
large values of quadratic Dirichlet $L$-functions in the number field
setting, where they employed the long resonator technique to obtain lower bound
for $\log L(\sigma,\chi_d)$ with $d$ ranging over fundamental discriminants, we establish the following lower bound for the corresponding family over function fields. Before presenting our result, we
first consider 
\begin{equation}\label{assumtion}
	\beta_q(\sigma,b)= b \mathcal{Z}(2-\sigma) (\beta_q(b))^{\sigma-1}, \quad \beta_q(b) = q \alpha_q(b),\quad\text{and } \quad \alpha_q(b)
	=
	\frac{2\mathcal{Z}(2)}{\ln q}
	\ln\left(\frac{(1+b)^2}{1+b^2}\right)
\end{equation}
for every fixed $\sigma \in (1/2,1)$, and $b \in (0,1)$.

\begin{thm}\label{thm inside critical strip}
	Let $q$ be a fixed odd prime power and $\beta_q(\sigma,b), \alpha_q(b)$ be as defined in \eqref{assumtion}. Then, for every sufficiently large  $\mathbf{N}$, there exists a polynomial  $P \in \mathcal{P}_\mathbf{N}$ such that
	\begin{equation*}\label{lower bound inside strip}
		\ln L(\sigma,\chi_P) \geq (\beta_q(\sigma,b)+o(1))\frac{ ( \log |P|)^{1-\sigma}}{(\log \log |P|)^{\sigma}}.
	\end{equation*}
	 Furthermore, for every  $	0<\xi<\frac{1}{\alpha_q(b)},$
	define
	$
		\tau_{\xi,\mathbf N}
		:=
		\beta_q(\sigma,b)
		\left(1-\eta\alpha_q(b)\right)^{1-\sigma}
		{\mathbf N^{1-\sigma}}
		{(\log\mathbf N)^{ -\sigma}},
	$ such that 
	\begin{equation*}
		\Phi_{\mathbf N}(\sigma,\xi)
		:=
		\frac{1}{	\# \mathcal P_{\mathbf N} }
		\#\left\{
		P\in\mathcal P_{\mathbf N}:
		\ln L(\sigma,\chi_P)
		\geq \tau_{\xi,\mathbf N}
		\right\}.
	\end{equation*}
		Then, for sufficiently large $\mathbf N$,
	\begin{equation*}
		\Phi_{\mathbf N}(\sigma,\xi)
		\geq
		|P|^{-\frac{1}{2}
			(1-\xi\alpha_q(b))+o(1)}.
	\end{equation*}
	
\end{thm}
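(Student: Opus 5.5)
We will prove Theorem~\ref{thm inside critical strip} with the long resonator method of Darbar and Maiti~\cite{darbar2025large}, adapted to $\mathbb{F}_q[t]$. Fix $\sigma\in(1/2,1)$ and $b\in(0,1)$. The first step is an approximate formula for $\ln L(\sigma,\chi_P)$. Because the Riemann hypothesis holds for curves over finite fields (Weil), $L(s,\chi_P)$ is a polynomial in $u=q^{-s}$ whose inverse zeros all have modulus $\sqrt q$. Writing $\ln L(\sigma,\chi_P)$ as a sum over prime powers and cutting the tail with the explicit formula gives
\[
\ln L(\sigma,\chi_P)=\sum_{\substack{Q \text{ monic irreducible}\\ \mathbf d(Q)\le x}} \frac{\chi_P(Q)}{|Q|^{\sigma}}+O\Bigl(\frac{q^{x(1-\sigma)}}{x}\cdot\frac{\mathbf N}{q^{x(1-\sigma)}}\cdots\Bigr),
\]
up to an error that is $o(\mathbf N^{1-\sigma}(\log\mathbf N)^{-\sigma})$ once $x$ is chosen of size about $\log \mathbf N$. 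The squares of primes give a bounded contribution because $\sigma>1/2$.

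Next we build a resonator $R(P)=\prod_{Q\in\mathcal S}\bigl(1+b\,\chi_P(Q)\bigr)$. Here $\mathcal S$ is the set of monic irreducible $Q$ with $\mathbf d(Q)\in[\log(\mathbf N\log \mathbf N)\text{-ish},\,\ldots]$, chosen so that $\prod_{Q\in\mathcal S}|Q|\le |P|^{\xi'}$ with $\xi'$ just below $1$ (or $\xi\alpha_q(b)$ in the second part). We then compare
\[
S_1=\sum_{P\in\mathcal P_{\mathbf N}} R(P)^2 \sum_{\mathbf d(Q)\le x}\frac{\chi_P(Q)}{|Q|^\sigma}
\qquad\text{and}\qquad
S_2=\sum_{P\in\mathcal P_{\mathbf N}}R(P)^2 .
\]
Expanding $R(P)^2=\prod_{Q\in\mathcal S}\bigl((1+b^2)+2b\chi_P(Q)\bigr)$ reduces both sums to character sums $\sum_{P\in\mathcal P_{\mathbf N}}\chi_P(A)$. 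By quadratic reciprocity in $\mathbb F_q[t]$ (note that $\mathbf N$ odd or $q\equiv1\bmod4$ only introduces a harmless sign) and the prime polynomial theorem in arithmetic progressions with the Weil error term, such a sum equals $\#\mathcal P_{\mathbf N}\cdot\mathbf 1_{A=\square}+O(|A|^{1/2}|P|^{1/2}/\mathbf N)$. Keeping only the square terms gives
\[
\frac{S_1}{S_2}\approx\sum_{Q\in\mathcal S}\frac{2b}{1+b^2}\,\frac{1}{|Q|^\sigma},
\]
and a prime-counting computation over the degrees in $\mathcal S$ turns this into $\beta_q(\sigma,b)\,\mathbf N^{1-\sigma}(\log\mathbf N)^{-\sigma}$. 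The length of $\mathcal S$ is governed by $\alpha_q(b)$, since $\ln\frac{(1+b)^2}{1+b^2}$ is the logarithmic size of $\max R^2/\text{mean}$. The factor $\mathcal Z(2-\sigma)$ appears after summing over degrees, as in Lumley~\cite{lumley2021}. Since $S_1/S_2$ is a weighted average of $\ln L$ with positive weights, some $P$ attains at least this value, which proves the first assertion.

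For the density statement we use a Hölder/Rankin comparison. Split $S_1$ according to whether $\ln L(\sigma,\chi_P)\ge\tau_{\xi,\mathbf N}$. The contribution from $P$ below the threshold is at most $\tau_{\xi,\mathbf N}S_2$, so the contribution from the $P$ above it must be large. We bound that contribution above by $\#\{P:\ln L\ge\tau\}\cdot\max_P R(P)^2\cdot\max_P|\ln L|$. We have $\max R^2\le (1+b)^{2|\mathcal S|}$, while $S_2\approx\#\mathcal P_{\mathbf N}(1+b^2)^{|\mathcal S|}$. The ratio $\bigl((1+b)^2/(1+b^2)\bigr)^{|\mathcal S|}$ equals $|P|^{\frac12(1-\xi\alpha_q(b))+o(1)}$ once $\mathcal S$ is shortened by the factor $(1-\xi\alpha_q(b))$, and this shortening is also what produces the factor $(1-\xi\alpha_q(b))^{1-\sigma}$ in $\tau$. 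The trivial bound $|\ln L|\ll \mathbf N^{1-\sigma}$ is absorbed into the $o(1)$ in the exponent.

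The main obstacle is the error term in the character sums. The off-diagonal terms are $O(|A|^{1/2}|P|^{1/2})$ summed over all divisors $A$ of $\prod_{Q\in\mathcal S}Q^2$ with weights, and this forces $\prod_{Q\in\mathcal S}|Q|\le|P|^{1/2-\delta}$. That constraint is exactly what fixes $\alpha_q(b)$ and the $\frac12$ in the exponent. We must therefore optimise the range of $\mathcal S$ carefully under this constraint while keeping the diagonal main term of the size claimed.
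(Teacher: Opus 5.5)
Your architecture matches the paper's: truncate $\ln L(\sigma,\chi_P)$ to $\sum_{\mathbf d(Q)\le x}\chi_P(Q)|Q|^{-\sigma}$ with $x\asymp\log\mathbf N$, weight by the square of a long resonator with weight $b$ at each prime, and for the density statement split at $\tau_{\xi,\mathbf N}$ and bound the large part by $\Phi_{\mathbf N}\cdot\max_P R(P)^2\cdot\max_P|\ln L|$. Your finite product $\prod_{Q\in\mathcal S}(1+b\chi_P(Q))$ would be an acceptable substitute for the paper's $\prod_{\mathbf d(Q)\le Y}(1-b\chi_P(Q))^{-1}$. It has the same per-prime ratio $(1+b)^2/(1+b^2)$ between $\max R^2$ and the diagonal mean, and its diagonal gain $2b/(1+b^2)$ even exceeds the factor $b$ the paper keeps.

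The gap is in the off-diagonal estimate, which is exactly what fixes the resonator's length. You bound $\sum_{P\in\mathcal P_{\mathbf N}}\chi_P(A)$ by $O(|A|^{1/2}|P|^{1/2}/\mathbf N)$. From this you deduce $\prod_{Q\in\mathcal S}|Q|\le|P|^{1/2-\delta}$, i.e. $\sum_{Q\in\mathcal S}\mathbf d(Q)\le(\frac12-\delta)\mathbf N$. Under such a constraint the best $\mathcal S$ is all primes of degree $\le Y$, since $|Q|^{-\sigma}/\mathbf d(Q)$ decreases with the degree. Then $\sum_{\mathbf d(Q)\le Y}\mathbf d(Q)\approx\mathcal Z(2)q^Y$ forces $q^Y\asymp\mathbf N$. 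Hence $\sum_{Q\in\mathcal S}|Q|^{-\sigma}\asymp q^{Y(1-\sigma)}/Y\asymp\mathbf N^{1-\sigma}/\log\mathbf N$. This misses the claimed $\mathbf N^{1-\sigma}(\log\mathbf N)^{-\sigma}$ by the unbounded factor $(\log\mathbf N)^{1-\sigma}$. Removing the small-degree primes, as your description of $\mathcal S$ suggests, only makes it worse; the paper's degree $\log(\mathbf N\log\mathbf N)$ is the top of the range, not the bottom.

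A constraint on $\prod|Q|$ also never involves $\ln\frac{(1+b)^2}{1+b^2}$, so it cannot be what ``fixes $\alpha_q(b)$''. It even contradicts your density paragraph, where you correctly size $\mathcal S$ by $((1+b)^2/(1+b^2))^{|\mathcal S|}=|P|^{\frac12(1-\xi\alpha_q(b))+o(1)}$. For that $\mathcal S$ one has $\prod_{Q\in\mathcal S}|Q|=|P|^{\asymp\log\mathbf N}$, far beyond what your error bound tolerates.

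The missing ingredient is the Weil bound in the form of Lemma~\ref{weil}. For non-square $A$ it gives $\sum_{P\in\mathcal P_{\mathbf N}}\chi_P(A)\ll q^{\mathbf N/2}\mathbf d(A)/\mathbf N$, which grows only linearly in $\mathbf d(A)$ (the analogue of the logarithm of the conductor), not like $|A|^{1/2}$. Summed over all pairs of resonator coefficients, this costs only a factor $\mathbf N^{O(1)}(\sum_A r(A))^2$. So the off-diagonal is $\ll\frac{q^{\mathbf N/2}}{\mathbf N}\mathbf N^{O(1)}\max_P R(P)^2$, i.e. $(1+b)^{2|\mathcal S|}$ for you and $(1-b)^{-2\Pi_q(Y)}$ in the paper. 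The diagonal is $\frac{q^{\mathbf N}}{\mathbf N}(1+b^2)^{|\mathcal S|}$, respectively $\frac{q^{\mathbf N}}{\mathbf N}((1+b^2)/(1-b^2)^2)^{\Pi_q(Y)}$. The only condition is therefore on the \emph{number} of primes: $|\mathcal S|\ln\frac{(1+b)^2}{1+b^2}<\frac{\mathbf N}{2}\ln q$.

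This permits all primes of degree $\le Y$ with $q^Y\approx a\mathbf N\log\mathbf N$ and $a<1/\alpha_q(b)$. The extra $\log\mathbf N$ in $q^Y$ is what produces $(\log\mathbf N)^{-\sigma}$ rather than $(\log\mathbf N)^{-1}$. Letting $a\to 1/\alpha_q(b)$, respectively taking $a\alpha_q(b)=1-\xi\alpha_q(b)+o(1)$, yields $\beta_q(\sigma,b)$ and the density exponent, as in the paper.

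A smaller point: $|\ln L(\sigma,\chi_P)|\ll\mathbf N^{1-\sigma}$ is not a trivial bound. Since $\mathcal L(u,\chi_P)$ is a polynomial of degree $<\mathbf N$ whose inverse roots have modulus at most $\sqrt q$, one trivially gets only $\ll_\sigma\mathbf N$. Any $e^{o(\mathbf N)}$ bound suffices at that step, so this does no harm.
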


\begin{rem}
	Letting $ b \to 1$ in the above Theorem, we obtain
	\[
	\beta_q(\sigma,b)=\mathcal{Z}(2-\sigma) \left( \frac{\ln q}{2 \mathcal{Z}(2)q \ln 2} \right) ^{1-\sigma}.
	\]
	This improves the constant 
	$
	\beta'_q(\sigma) = {\mathcal{Z}(2-\sigma)}{(10 q \mathcal{Z}(2))^{\sigma-1}},
	$
	obtained by Lumley \cite{lumley2021}. Thus, our result gives the same order of magnitude as Lumley's
	$\Omega$-result \eqref{lumley rel}, with a strictly larger leading constant.
\end{rem}

The endpoint $\sigma=1$ has also been studied previously. In particular,
it was shown in \cite[Theorem 1.6]{lumley2019complex} that, for sufficiently large $\mathbf{N}$, there exists
an irreducible polynomial $Q_1$ of degree  $\mathbf{N}$ such that
\[
L(1,\chi_{Q_1})
>
e^{\gamma}
\left(
\log_2 |Q_1|+\log_3 |Q_1|
\right)
+O(1),
\]
where $\gamma$ denotes the Euler--Mascheroni constant.

The structure of this article is as follows. Section~\ref{section preli} is devoted to the relevant background and preliminary results that will be used in the subsequent proofs. In Section~\ref{Gal sum sec}, we obtain a lower bound for the Gál sum associated with a set of monic squarefree polynomials, which is then applied in the proof of Theorem \ref{main thm}. Sections~\ref{proof thm 1}, \ref{proof thm 2}, and \ref{section proof inside critical strip} give the proofs of Theorems~\ref{main thm}, \ref{main rel near central}, and \ref{thm inside critical strip}, respectively.

\section{Preliminaries}\label{section preli}

Throughout, let $q=p^r$, where $p$ is an odd prime and $r\geq 1$, and write $\mathcal{A}:=\mathbb{F}_q[t]$ for the polynomial ring over the finite field $\mathbb{F}_q$. For $A\in\mathcal{A}$, we use $\mathbf{d}(A)$ to denote its degree. If
$A\neq 0$, its norm is given by $ |A|:=q^{\mathbf{d}(A)},$ while, by convention, $|0|:=0$. We shall refer to a monic irreducible element of $\mathcal{A}$ as a prime polynomial.

We write $\mathcal{M}$ for the collection of monic polynomials in $\mathcal{A}$, while $\mathcal{P}$ denotes the collection of monic
irreducible polynomials.  For $n\in\mathbb{N}$, we define the following subcollections of $\mathcal{M}$ and $\mathcal{P}$:
\begin{align*}
    \mathcal{M}_n &= \{ A \in \mathcal{M} : \mathbf{d}(A) = n \},\\
    \mathcal{M}_{\leq n} &= \{ A \in \mathcal{M} : \mathbf{d}(A) \leq n \},\\
    \mathcal{P}_n &= \{ P \in \mathcal{P} : \mathbf{d}(P) = n \},\\
    \mathcal{P}_{\leq n} &= \{ P \in \mathcal{P} : \mathbf{d}(P) \leq n \},
\end{align*}
 and let
\[
\pi_q(n):=\#\mathcal{P}_n=|\mathcal{P}_n|
\qquad\text{and}\qquad
\Pi_q(n):=\#\mathcal{P}_{\leq n}=|\mathcal{P}_{\leq n}|.
\]
The zeta function associated with $\mathcal{A}$ is given by the infinite series
\[
\mathcal{Z}(s) = \sum_{A \in \mathcal{M}} \frac{1}{|A|^s}
= \prod_{P \in \mathcal{P}} \left(1 - \frac{1}{|P|^{s}}\right)^{-1}, \quad \Re(s)>1.
\]
Since there are exactly $q^n$ different monic polynomials of degree $n$, the zeta
function admits the explicit expression
\[
\mathcal{Z}(s)
=\sum_{n\geq 0} q^{n(1-s)}
=\frac{1}{1-q^{1-s}}.
\]
Thus, $\mathcal{Z}(s)$ extends meromorphically to the entire complex plane,
with its only pole being a simple pole at $s=1$. 

The quadratic Dirichlet $L$-function associated with $\chi_P$ admits the Euler product
\[
L(s,\chi_P)
=
\prod_{Q\in\mathcal{P}}
\left(1-\frac{\chi_P(Q)}{|Q|^s}\right)^{-1},
\qquad \Re(s)>1.
\]
After making the change of variable $u=q^{-s}$, the above $L$-function will also be viewed in the $u$-variable and denoted by $\mathcal{L}(u,\chi_P)$, where
\[
\mathcal{L}(u,\chi_P) =\sum_{A \in \mathcal{M}} \frac{\chi_P(A)}{|A|^s} =\prod_{Q \in \mathcal{P}} \left(1-\chi_P(Q) u^{\mathbf{d}(Q)}\right)^{-1},
\qquad |u|<\frac{1}{q}. 
\]
We refer to
\cite{rosen2002} for the relevant background.

For $A \in \mathcal{A}$, the function field analogue of the von Mangoldt function $\Lambda_{\mathcal{A}}(A)$ is defined as:
\[
\Lambda_{\mathcal{A}}(A) =
\begin{cases} 
	\mathbf{d}(P), \quad \text{if} \quad A=P^{k} \quad \text{for some monic irreducible polynomial and } k\geq 1,
	\\
	0,\quad \quad \quad \text{otherwise.}
\end{cases}
\]

We now present the approximate functional equation for $L(1/2,\chi_P)$ in the case $P \in \mathcal{P}_{2g+1}$.

\begin{lem}[Approximate Functional Equation]
	\label{ApproximateFE}
Suppose that $P\in\mathcal{P}_{2g+1}$ and let $\chi_P$ denote the corresponding quadratic Dirichlet character. Then
\[
L\left(s,\chi_P\right)
=
\sum_{A\in\mathcal{M}_{\leq g}}
\frac{\chi_P(A)}{|A|^s}
+ q^{g(1-2s)}
\sum_{A\in\mathcal{M}_{\leq g-1}}
\frac{\chi_P(A)}{|A|^{1-s}}.
\]
\end{lem}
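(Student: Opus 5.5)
The plan is to prove the identity as a polynomial identity in $u=q^{-s}$: first show that $\mathcal{L}(u,\chi_P)$ is a polynomial of degree $2g$, then use its functional equation to fold the high-degree coefficients onto the low-degree ones. Since both sides are then entire in $s$, it suffices to prove the identity as polynomials in $u$; this also covers $\Re(s)\le 1$ by analytic continuation. Write
\[
\mathcal{L}(u,\chi_P)=\sum_{n\ge 0} a_n u^n,\qquad a_n:=\sum_{A\in\mathcal{M}_n}\chi_P(A).
\]

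\textbf{Step 1 (finiteness).} For $n\ge \mathbf{d}(P)=2g+1$, I will show $a_n=0$. Each $A\in\mathcal{M}_n$ can be written uniquely as $A=t^{n}+B$ with $\mathbf{d}(B)<n$. As $B$ ranges over all polynomials of degree $<n$, it covers every residue class modulo $P$ exactly $q^{n-\mathbf{d}(P)}$ times, and so does $A$. Since $\chi_P$ is a nontrivial character modulo $P$, its sum over a complete residue system is $0$. Hence $\mathcal{L}(u,\chi_P)$ is a polynomial of degree at most $2g$.

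\textbf{Step 2 (functional equation).} Since $\mathbf{d}(P)=2g+1$ is odd, $\mathcal{L}(u,\chi_P)$ is the numerator of the zeta function of the genus-$g$ curve $C_P$. It therefore has degree exactly $2g$ and satisfies
\[
\mathcal{L}(u,\chi_P)=(qu^2)^g\,\mathcal{L}\!\left(\tfrac{1}{qu},\chi_P\right)
\]
(see \cite{rosen2002}). This is the one input I would quote rather than prove; it could alternatively be derived from the Riemann–Roch theorem for $C_P$. I expect this citation, and checking the normalization in the odd-degree case, to be the only real obstacle. Comparing the coefficients of $u^n$ on both sides gives
\[
a_n=q^{\,n-g}\,a_{2g-n},\qquad 0\le n\le 2g .
\]

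\textbf{Step 3 (splitting).} Split the polynomial as
\[
\mathcal{L}(u,\chi_P)=\sum_{n=0}^{g}a_nu^n+\sum_{n=g+1}^{2g}a_nu^n .
\]
In the second sum, substitute $a_n=q^{n-g}a_{2g-n}$ and put $m=2g-n\in[0,g-1]$. This turns the second sum into
\[
\sum_{m=0}^{g-1} q^{\,g-m}\,a_m\,u^{2g-m}.
\]
With $u=q^{-s}$ the power factor simplifies:
\[
q^{\,g-m}q^{-s(2g-m)}=q^{g(1-2s)}\,q^{-m(1-s)}=q^{g(1-2s)}|A|^{-(1-s)}\quad\text{for }A\in\mathcal{M}_m .
\]
Reinserting $a_m=\sum_{A\in\mathcal{M}_m}\chi_P(A)$ then yields exactly the two sums in Lemma~\ref{ApproximateFE}.
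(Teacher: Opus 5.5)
Your proposal is correct and is essentially the standard derivation behind the paper's citation (Andrade--Keating, Eq.~(3.23)). It proceeds by polynomiality of $\mathcal{L}(u,\chi_P)$, then the functional equation $\mathcal{L}(u,\chi_P)=(qu^2)^g\mathcal{L}(1/(qu),\chi_P)$, and then folding the coefficients of $u^{g+1},\dots,u^{2g}$ back onto those of $u^{g-1},\dots,u^{0}$.

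On the normalization point you flagged: with the paper's convention $\chi_P(A)=\left(\frac{A}{P}\right)$ and $\mathbf{d}(P)$ odd, reciprocity gives $\chi_P(A)=(-1)^{\frac{q-1}{2}\mathbf{d}(A)}\left(\frac{P}{A}\right)$. So for $q\equiv 3 \pmod 4$ your $\mathcal{L}(u,\chi_P)$ is the zeta numerator of $C_P$ evaluated at $-u$, equivalently that of the twist $y^2=cP(t)$ with $c$ a non-square in $\mathbb{F}_q$. The functional equation is invariant under $u\mapsto -u$, so Steps 2--3 go through unchanged.
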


\begin{proof}
 See	\cite[Eq.~(3.23)]{andra2012}.
\end{proof}

The following lemmas, which are essential for our purposes, provide a bound for 
character sums over irreducible polynomials and is a consequence of the Riemann 
hypothesis for curves.

\begin{lem}[Weil bound]
\label{weil}
For any non-square polynomial $A\in\mathcal{M}$ and any positive integer $n$,
\[
\left|
\sum_{P\in\mathcal{P}_n}\chi_P(A)
\right|
\ll
\frac{q^{n/2}}{n}\mathbf{d}(A).
\]
\end{lem}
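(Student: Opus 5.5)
The plan is to move the character from the prime $P$ onto the fixed polynomial $A$ by quadratic reciprocity. This turns the sum into a prime sum of a single fixed character modulo $A$, which is then controlled by the Riemann hypothesis for curves through the explicit formula.

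\textbf{Step 1: reciprocity.} Both $A$ and $P$ are monic, so quadratic reciprocity in $\mathbb{F}_q[t]$ gives
\[
\chi_P(A)=\left(\frac{A}{P}\right)=(-1)^{\frac{q-1}{2}\mathbf{d}(A)\mathbf{d}(P)}\left(\frac{P}{A}\right).
\]
Here $\left(\frac{\cdot}{A}\right)$ is the Jacobi symbol. For $\mathbf{d}(P)=n$ the sign depends only on $n$ and $\mathbf{d}(A)$. Hence
\[
\sum_{P\in\mathcal{P}_n}\chi_P(A)=\pm\sum_{P\in\mathcal{P}_n}\chi_A(P),
\]
where $\chi_A:=\left(\frac{\cdot}{A}\right)$ is a quadratic character modulo $A$. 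Primes dividing $A$ contribute $0$ on both sides.

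\textbf{Step 2: nontriviality.} Write $A=A_1A_2^2$ with $A_1$ squarefree. Then $\chi_A$ agrees with $\chi_{A_1}$ on polynomials coprime to $A_2$. Since $A$ is not a square, $\mathbf{d}(A_1)\ge1$, so $\chi_{A_1}$ is a nontrivial primitive character modulo $A_1$ (the Legendre symbols at the distinct primes dividing $A_1$ are independent by CRT). Consequently $\chi_A$ is nontrivial.

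\textbf{Step 3: explicit formula.} Because $\chi_A$ is nontrivial, $\mathcal{L}(u,\chi_A)$ is a polynomial of degree at most $\mathbf{d}(A)-1$. It factors as
\[
\mathcal{L}(u,\chi_A)=\mathcal{L}(u,\chi_{A_1})\prod_{Q\mid A_2,\;Q\nmid A_1}\bigl(1-\chi_{A_1}(Q)u^{\mathbf{d}(Q)}\bigr).
\]
\begin{itemize}
\item By Weil's theorem (RH for the hyperelliptic curve $y^2=A_1$, or for $\mathbb{F}_q$ with possibly a trivial factor $(1-u)$ at infinity), the inverse roots of $\mathcal{L}(u,\chi_{A_1})$ have modulus $\sqrt q$ or $1$.
\item The extra Euler factors have inverse roots of modulus $1$.
\end{itemize}
Writing $\mathcal{L}(u,\chi_A)=\prod_{j}(1-\alpha_ju)$ with at most $\mathbf{d}(A)$ factors and comparing coefficients of $u^n$ in $u\,\frac{d}{du}\log$, we get
\[
\sum_{B\in\mathcal{M}_n}\Lambda_{\mathcal{A}}(B)\chi_A(B)=-\sum_j\alpha_j^n,
\]
and therefore
\[
\Bigl|\sum_{B\in\mathcal{M}_n}\Lambda_{\mathcal{A}}(B)\chi_A(B)\Bigr|\le \mathbf{d}(A)\,q^{n/2}.
\]

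\textbf{Step 4: removing prime powers.} The left side equals $n\sum_{P\in\mathcal{P}_n}\chi_A(P)$ plus the contribution of $B=Q^k$ with $k\ge2$ and $k\,\mathbf{d}(Q)=n$. That contribution is trivially bounded by
\[
\sum_{d\mid n,\;d\le n/2} d\,\pi_q(d)\le\sum_{d\le n/2}q^d\ll q^{n/2}\le \mathbf{d}(A)\,q^{n/2}.
\]
Dividing by $n$ gives the claimed bound $\ll q^{n/2}\mathbf{d}(A)/n$.

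The main obstacle is Step 3 when $A$ is not squarefree: one has to handle the imprimitive character and confirm that the degree of $\mathcal{L}(u,\chi_A)$, counting the extra Euler factors, is still $O(\mathbf{d}(A))$ with all inverse roots of modulus at most $\sqrt q$. The factorization above settles this. The rest is standard input from Weil's RH, for which we refer to \cite{rosen2002}.
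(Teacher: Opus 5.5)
Your argument is correct. Reciprocity moves the sum onto the fixed character $\chi_A$ modulo $A$, whose $L$-function $\mathcal{L}(u,\chi_A)$ is a polynomial of degree at most $\mathbf{d}(A)-1$ with all inverse roots of modulus $\sqrt q$ or $1$; your factorization handles the imprimitive case properly. The explicit formula, together with the trivial bound on the prime-power terms, then gives the claim. The paper gives no proof of its own and only cites Rudnick's Eq.~(2.5); your proof is the standard derivation behind that citation.
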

\begin{proof}
See \cite[Eq.~(2.5)]{rudnick2010}.
\end{proof}

\begin{lem}[Prime Polynomial Theorem]
	\label{prime pol thm}
	Assume that $n$ is a large positive integer. Then
\[
\pi_q(n)= \frac{q^n}{n} + O\left(\frac{q^{n/2}}{n}\right), \quad\text{and} \quad
\pi_q(n) \leq \frac{q^n}{n}.
\]
\end{lem}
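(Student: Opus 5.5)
The Prime Polynomial Theorem is classical, and I would derive it from the exact Gauss identity rather than from any analytic machinery. The starting point is the generating function identity
\[
\mathcal{Z}(s)=\prod_{P\in\mathcal{P}}\left(1-|P|^{-s}\right)^{-1}=\frac{1}{1-q^{1-s}}.
\]
In the variable $u=q^{-s}$, this reads $\prod_{P}(1-u^{\mathbf{d}(P)})^{-1}=(1-qu)^{-1}$. Taking logarithmic derivatives and multiplying by $u$ gives
\[
\sum_{d\ge1}\sum_{P:\,\mathbf{d}(P)=d}\ \sum_{k\ge1} d\,u^{dk}=\sum_{n\ge1}q^nu^n .
\]
Comparing coefficients of $u^n$ yields the exact identity
\[
\sum_{d\mid n} d\,\pi_q(d)=q^n,
\]
equivalently $\sum_{A\in\mathcal{M}_n}\Lambda_{\mathcal{A}}(A)=q^n$.

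The upper bound $\pi_q(n)\le q^n/n$ follows immediately. Every term in the divisor sum is nonnegative, so dropping all terms with $d<n$ gives $n\pi_q(n)\le q^n$.

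For the asymptotic, I would isolate the $d=n$ term:
\[
n\pi_q(n)=q^n-\sum_{d\mid n,\ d<n} d\,\pi_q(d).
\]
Each $d\pi_q(d)$ is at most $q^d$ by the same identity. Every proper divisor satisfies $d\le n/2$, and there are at most $n/2$ such divisors. The sum of $q^d$ over proper divisors is therefore at most
\[
q^{n/2}+\sum_{d\le n/3}q^d\le q^{n/2}+2q^{n/3},
\]
which is $O(q^{n/2})$. Dividing by $n$ gives $\pi_q(n)=q^n/n+O(q^{n/2}/n)$. Since $q$ is fixed, the implied constant is absolute, and the result holds for all $n$, not just large $n$.

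There is no real obstacle in this argument. The one point needing care is the error estimate: the proper divisors must be bounded by $q^{n/2}$ plus a geometric tail, rather than by (number of divisors)$\times q^{n/2}$. The latter would introduce an extra factor of the divisor count and spoil the stated bound.
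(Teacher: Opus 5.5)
Your proof is correct and is essentially the standard argument behind the result the paper cites (Rosen, Theorem~2.2): take the logarithmic derivative of the Euler product for $\mathcal{Z}$ to get Gauss's identity $\sum_{d\mid n} d\,\pi_q(d)=q^n$, then bound the proper-divisor terms by $q^{n/2}$ plus a geometric tail. The textbook version usually passes through M\"obius inversion, $n\pi_q(n)=\sum_{d\mid n}\mu(d)q^{n/d}$. Your use of the nonnegativity of each $d\,\pi_q(d)$ instead is a cosmetic difference, with the small bonus that it gives both parts of the lemma at once as $q^n-3q^{n/2}\le n\pi_q(n)\le q^n$, valid for every $n\ge 1$.
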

\begin{proof}
	See \cite[Theorem~2.2]{rosen2002}.
\end{proof}

\begin{lem}
	\label{char sum}
Let $A \in \mathcal{M}$. Then
\[
\sum_{P \in \mathcal{P}_n} \chi_P(A^2) = \frac{q^n}{n} + O\left(\frac{q^{n/2}}{n}\right) + O(\log \log \mathbf{d}(A)).
\]
\end{lem}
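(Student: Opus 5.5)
We would argue directly from the definition of the quadratic residue symbol. For $P\in\mathcal{P}_n$ and $A\in\mathcal{M}$, the value $\chi_P(A^2)$ equals $\chi_P(A)^2$. This is $1$ when $P\nmid A$ and $0$ when $P\mid A$. Hence
\[
\sum_{P\in\mathcal{P}_n}\chi_P(A^2)=\pi_q(n)-\#\{P\in\mathcal{P}_n:\ P\mid A\}.
\]
The first step is to insert Lemma~\ref{prime pol thm}, which gives $\pi_q(n)=q^n/n+O(q^{n/2}/n)$. This produces the main term and the first error term. Everything then reduces to bounding the number $\omega_n(A)$ of distinct degree-$n$ prime divisors of $A$.

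The second step is to bound $\omega_n(A)$. The distinct degree-$n$ primes dividing $A$ multiply to a divisor of $A$, so we have the trivial bound $\omega_n(A)\le \mathbf{d}(A)/n$. We also have $\omega_n(A)\le\pi_q(n)\le q^n/n$. We would then split into cases.
\begin{itemize}
\item If $\mathbf{d}(A)\le q^{n/2}$, then $\omega_n(A)\le \mathbf{d}(A)/n\le q^{n/2}/n$, and this is absorbed into the $O(q^{n/2}/n)$ term.
\item If $n\ge \mathbf{d}(A)/\ln_2\mathbf{d}(A)$, then $\omega_n(A)\le \mathbf{d}(A)/n\le \ln_2\mathbf{d}(A)$, which is the stated $O(\ln_2\mathbf{d}(A))$ term. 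Here $\ln_2$ plays the role of the ``$\log\log$'' in the statement.
\end{itemize}

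The main obstacle is the remaining intermediate range, namely $q^{n/2}<\mathbf{d}(A)$ with $n<\mathbf{d}(A)/\ln_2\mathbf{d}(A)$. In this range neither trivial bound gives $O(\ln_2\mathbf{d}(A))$. The example of $A$ equal to the product of all primes of degree $n$ gives $\omega_n(A)=\pi_q(n)$ there, so no uniform argument can close this range. We would therefore prove the lemma in the form in which it is used later in the paper. In that form $A$ is built from the resonator and satisfies $\mathbf{d}(A)\le q^{n/2}$, up to a bounded factor. Under this condition the first case above already covers everything, and the stated asymptotic follows. We would record this restriction explicitly at the point where the lemma is invoked, rather than claim the bound for completely arbitrary $A$.
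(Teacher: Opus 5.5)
The paper gives no argument of its own for this lemma; it only cites Lemma~5 of Darbar and Maiti. Your direct route is the right one. Since $\chi_P(A^2)$ equals $1$ when $P\nmid A$ and $0$ when $P\mid A$, the sum is exactly $\pi_q(n)-\omega_n(A)$, and Lemma~\ref{prime pol thm} handles $\pi_q(n)$. Your counterexample is also correct. For $A=\prod_{P\in\mathcal{P}_n}P$ we have $\mathbf{d}(A)=n\pi_q(n)\asymp q^n$, so the left side is $0$ while the right side is $q^n/n+O(q^{n/2}/n)+O(\ln n)$. Hence the lemma cannot hold uniformly in $A$, and some hypothesis on $A$ is genuinely needed. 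Your two cases are fine as far as they go.

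The problem is the hypothesis you chose. The bound $\mathbf{d}(A)\le q^{n/2}$ holds in Sections~\ref{proof thm 1} and~\ref{proof thm 2}, where $n=2g+1$ and $\mathbf{d}(A)\ll g\ln g$. It fails in Section~\ref{section proof inside critical strip}. There $r$ is completely multiplicative and supported on \emph{all} monic polynomials whose prime factors have degree at most $Y$. So in $\mathcal{S}_1(\sigma)$ and $\mathcal{S}_2(\sigma)$ the lemma is applied to $MN$ and $MNQ$ of unbounded degree, and your restricted version does not justify that step.

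The repair comes from your own reduction. In that section every irreducible factor of $A$ has degree at most $\max(Y,Y_o)\ll\log\mathbf{N}<\mathbf{N}$. Hence $\omega_{\mathbf{N}}(A)=0$ and the sum equals $\pi_q(\mathbf{N})$ exactly. The same holds in Sections~\ref{proof thm 1} and~\ref{proof thm 2}: factors of $K$ have degree at most $g$, and factors of $M,N$ have degree $\ll\log g$, all below $2g+1$.

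So the hypothesis to record is that $A$ has no irreducible factor of degree $n$, or more generally $\omega_n(A)\ll q^{n/2}/n$. Under it the lemma is just Lemma~\ref{prime pol thm}, and the $O(\log\log\mathbf{d}(A))$ term is not needed at all.
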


\begin{proof}
See \cite[Lemma~5]{darbar2024}.
\end{proof}

\begin{lem}\label{produc of poly}
Assume that $0\le s<1$. Then, for a large positive integer $n$,
\[
\sum_{P \in \mathcal{P}_{\leq n}} \frac{1}{|P|^s}
= \mathcal{Z}(2-s)\frac{q^{n(1-s)}}{n}\left(1 + O\left(\frac{\ln n}{n}\right)\right).
\]
For the special case $s=0$, one obtains
\[
\Pi_q(n) = \mathcal{Z}(2)\frac{q^n}{n}\left(1 + O\left(\frac{\ln n}{n}\right)\right).
\]
\end{lem}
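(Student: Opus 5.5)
We need to prove the final lemma (Lemma \ref{produc of poly}): for $0\le s<1$,
\[
\sum_{P\in\mathcal{P}_{\le n}}|P|^{-s}=\mathcal{Z}(2-s)\frac{q^{n(1-s)}}{n}\left(1+O\left(\frac{\ln n}{n}\right)\right).
\]

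The plan is to group by degree and use the Prime Polynomial Theorem (Lemma \ref{prime pol thm}). Write
\[
\sum_{P\in\mathcal{P}_{\le n}}|P|^{-s}=\sum_{k=1}^{n}\pi_q(k)\,q^{-ks}.
\]
By Lemma \ref{prime pol thm}, $\pi_q(k)=q^k/k+O(q^{k/2}/k)$. Substituting, the error contributes
\[
\sum_{k\le n}\frac{q^{k(1/2-s)}}{k}.
\]
This is $O(q^{n(1/2-s)}+\ln n)$ when $s\le 1/2$, and $O(1)$ otherwise. In either case it is $O\bigl(q^{n(1-s)}n^{-2}\bigr)$, because $1-s>0$ is fixed, so it is absorbed into the stated error term.

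The main term is $S:=\sum_{k\le n}q^{k(1-s)}/k$. Set $j=n-k$ and $x=q^{-(1-s)}<1$. Then
\[
S=\frac{q^{n(1-s)}}{n}\sum_{j=0}^{n-1}x^j\,\frac{n}{n-j}.
\]
We split the inner sum at $j_0=\lfloor C\ln n\rfloor$, with $C$ large enough (depending on $s,q$) that $x^{j_0}\le n^{-2}$.

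For $j\le j_0$ we have $\frac{n}{n-j}=1+O(\ln n/n)$. This range therefore gives
\[
\sum_{j\le j_0}x^j\bigl(1+O(\ln n/n)\bigr)=\frac{1}{1-x}+O(x^{j_0})+O(\ln n/n).
\]
For $j>j_0$ we only use the crude bound $n/(n-j)\le n$. The tail is then at most $n\,x^{j_0}/(1-x)=O(1/n)$.

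Combining the two ranges, the inner sum equals $\frac{1}{1-x}\bigl(1+O(\ln n/n)\bigr)$. Finally, $\frac{1}{1-q^{s-1}}=\mathcal{Z}(2-s)$ by the explicit formula for $\mathcal{Z}$. The case $s=0$ gives the formula for $\Pi_q(n)$ directly.

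The only delicate point is the tail $j$ close to $n$, where $n/(n-j)$ is large. The geometric decay $x^{j}$ beyond $j_0\asymp\ln n$ handles it, and this cutoff is exactly what produces the $\ln n/n$ error term. The implied constants depend on $s$ and $q$, which is harmless since both are fixed.
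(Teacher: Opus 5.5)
Your proof is correct. The paper does not prove this lemma itself but only cites \cite[Lemma~3.2]{dokic2020}. Your argument therefore gives a self-contained derivation from Lemma~\ref{prime pol thm} alone, with the dependence of the constants on $s$ and $q$ made explicit. Its steps are grouping the primes by degree, inserting the Prime Polynomial Theorem, rewriting $\sum_{k\le n}q^{k(1-s)}/k$ with $k=n-j$, and splitting at $j_0\asymp \ln n$.

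There are two small points.

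First, Lemma~\ref{prime pol thm} is stated only for large degree, while you apply it for every $k\le n$. This is harmless. Bounded $k$ contribute $O(1)$ in any case. Also, $\sum_{d\mid k}d\,\pi_q(d)=q^k$ gives $|\pi_q(k)-q^k/k|\le 2q^{k/2}/k$ for all $k\ge 1$.

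Second, your closing remark overstates the role of the cutoff. For $j\le j_0$ you have $\frac{n}{n-j}-1=\frac{j}{n-j}\le \frac{j}{n-j_0}$. Hence $\sum_{j\le j_0}x^j\frac{j}{n-j}\le \frac{1}{n-j_0}\sum_{j\ge 0}jx^j=O(1/n)$. The tail and the Prime Polynomial Theorem error are also $O(1/n)$ relative to the main term. So your argument actually yields relative error $O(1/n)$, and the stated $O(\ln n/n)$ is simply a weaker but still valid bound.
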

\begin{proof}
See	\cite[Lemma~3.2]{dokic2020}.
\end{proof}

\section{Gál's sum over \texorpdfstring{$\mathbb{F}_q[t]$}{F\_q[t]} }\label{Gal sum sec}

Suppose that $\mathfrak{M}$ is a finite family of monic polynomials in $\mathcal{A}$. For $A,B\in\mathcal{M}$, we write $(A,B)$ for their greatest common divisor
and $[A,B]$ for their least common multiple. Recall that the G\'al sum corresponding to $\mathfrak{M}$ is given by
\[
S(\mathfrak{M}) := \sum_{A,B \in \mathfrak{M}} \sqrt{\frac{|(A,B)|}{|[A,B]|}}.
\]
 For an integer $\mathcal{N}$, we define
\[
\Gamma(\mathcal{N}) := \sup_{{|\mathfrak{M}| = \mathcal{N}}} \frac{S(\mathfrak{M})}{|\mathfrak{M}|},
\]
where the supremum is taken as $\mathfrak{M}$ ranges over all finite sets with $\mathcal{N}$ elements.

Motivated by the ideas of de la Bret\`eche and Tenenbaum \cite{breteche2019}, particularly by their use of G\'al-type sums in deriving lower bounds for Dirichlet $L$-functions, Doki\'c et al.~\cite{dokic2020} developed an analogous framework over the rational function field $\mathbb{F}_q[t]$. In particular, they established a lower bound for $S(\mathfrak{M})$ when $\mathfrak{M}$ is a finite set of monic polynomials and applied this estimate to the study of large values of Dirichlet $L$-functions over function fields. In the present work, we require a corresponding lower bound for $S(\mathfrak{M})$ under the additional restriction that $\mathfrak{M}$ consists of monic squarefree polynomials. The following result on GCD sums provides the main ingredient needed in the proof of Theorem \ref{main thm}.

\begin{prop}\label{prop}
Let $\mathcal{N}$ be a sufficiently large integer. There exists a set of monic squarefree polynomials $\mathfrak{M}_{\mathcal{N}} =\mathfrak{R}$ of cardinality $\mathcal{N}$ such that
\[
S(\mathfrak{R}) \ge \mathcal{N} \exp\left( \left( \sqrt{\frac{\sqrt{q}+1}{\sqrt{q}-1} \ln q} + o(1) \right) \sqrt{\frac{\ln \mathcal{N} \ln_3 \mathcal{N}}{\ln_2 \mathcal{N}}}\right).
\]
\end{prop}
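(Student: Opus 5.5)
The plan is to adapt the de la Bret\`eche--Tenenbaum construction, in the form carried over to $\mathbb{F}_q[t]$ by Doki\'c et al.~\cite{dokic2020}, and to check that the extremal set it produces already consists of squarefree polynomials. Let $\mathcal{N}$ be large and set $L := \ln \mathcal{N}\,\ln_2\mathcal{N}$ (suitably rescaled). Let $\mathcal{Q}$ be the set of prime polynomials $Q$ whose degree lies in a window $\log L < \mathbf{d}(Q) \le \log L + \log_2 L$ or so. We attach to each $Q$ the weight
\[
f(Q) = \frac{a}{\sqrt{|Q|}\,\log_2 L\, \log(|Q|/\log L)},
\]
with $a$ a parameter to be optimized; this mimics $f(p)=a\sqrt{L}/(\sqrt{p}\log_2 L\,\ln p)$. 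We then take $\mathfrak{R}$ to be the set of squarefree products $A=\prod Q_i$ of distinct $Q\in\mathcal{Q}$ for which $\sum_{Q\mid A} f(Q)^{-1}$-type constraints hold, i.e.\ $\omega(A)$ lies near its typical value. The set is squarefree by construction, so the extra restriction costs nothing. The main thing to verify is that the function field count of primes in each degree does not lose the constant.

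First, lower bound $S(\mathfrak{R})$. Use the multiplicativity of $\sqrt{|(A,B)|/|[A,B]|}$ over squarefree $A,B$. For fixed $A\in\mathfrak{R}$, restrict to $B$ of the form $B=A D/E$ with $E\mid A$ and $D$ coprime to $A$, both composed of primes in $\mathcal{Q}$. The ratio becomes $|D|^{-1/2}|E|^{-1/2}$. We then show that
\[
\sum_{B\in\mathfrak{R}}\sqrt{|(A,B)|/|[A,B]|}\ge \exp\Big((1+o(1))\sum_{Q\in\mathcal{Q}}|Q|^{-1/2}\cdot(\text{number of admissible choices})\Big)
\]
via a Rankin-type argument. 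In practice we follow \cite{dokic2020} and pick $B$ among products of $k$ primes from $\mathcal{Q}$ with $k\approx \sqrt{\ln\mathcal{N}/\ln_2\mathcal{N}}$. The key computation is
\[
\sum_{\log L<n\le \log L+\log_2L}\pi_q(n)q^{-n/2},
\]
which by Lemma~\ref{prime pol thm} equals $\sum_n q^{n/2}/n$. This geometric sum is dominated by its top term with factor $\sum_{j\ge0}q^{-j/2}=\frac{\sqrt q}{\sqrt q-1}$. Combined with the symmetric contribution from primes removed from $A$, this yields exactly the factor $\frac{\sqrt q+1}{\sqrt q-1}$. In the integer case the analogue is an integral, and it is the discreteness of degrees that produces this constant.

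Second, compare $|\mathfrak{R}|$ with $\mathcal{N}$. Choose the number of prime factors $k$ and the window so that $\#\mathfrak{R}=\binom{\#\mathcal{Q}}{k}$ is about $\mathcal{N}$, using Lemma~\ref{produc of poly} or Lemma~\ref{prime pol thm} for $\#\mathcal{Q}$. If needed, pad or trim to get exactly $\mathcal{N}$ elements; trimming is justified by an averaging argument, since $S/|\mathfrak{R}|$ is an average over $A$ and we may keep the elements with the largest row sums. Then optimize $a$ (equivalently $k$ and the window) to obtain the exponent $\sqrt{\frac{\sqrt q+1}{\sqrt q-1}\ln q}\sqrt{\ln\mathcal{N}\ln_3\mathcal{N}/\ln_2\mathcal{N}}$. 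The $\ln q$ enters through the conversion $\log=\ln/\ln q$ in the degree window.

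The main obstacle is the first step: extracting the precise constant $\frac{\sqrt q+1}{\sqrt q-1}$ from the discrete degree sums while keeping error terms $o(1)$ in the exponent. The concentration of $\omega$ (a binomial/Chernoff estimate) must not lose a constant factor. If a direct computation is messy, the fallback is to invoke the construction of \cite[Section~3]{dokic2020} verbatim, observe that their extremal set is built from products of distinct primes and is hence squarefree, and conclude.
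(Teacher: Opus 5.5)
There is a genuine gap: your construction has no multiscale structure, and that structure is exactly what produces the factor $\sqrt{\ln_3\mathcal{N}}$.

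In the paper, the primes are split into $(\ln_2\mathcal{N})^{\gamma}$ blocks $\mathcal{I}_k$ with $q^{i_k}\approx u^k\ln\mathcal{N}\ln_2\mathcal{N}$. One sets $\mathfrak{R}=\prod_k\mathfrak{R}_k$, where $\mathfrak{R}_k$ consists of the divisors of $N_k=\prod_{P\in\mathcal{I}_k}P$ with at most $J_k\asymp \ln\mathcal{N}/(k^2\ln_3\mathcal{N})$ prime factors. Because $S$ factors over blocks, block $k$ contributes $(h/\alpha^2)^{j_k}$ with $j_k\asymp k^{-1}\sqrt{\ln\mathcal{N}/(\ln_2\mathcal{N}\ln_3\mathcal{N})}$. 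The cost is $\ln|\mathfrak{R}|\approx\sum_k J_k\,k\ln u\approx c\gamma\ln u\,\ln\mathcal{N}$. The $\ln_3\mathcal{N}$ is the harmonic sum $\sum_{k\le(\ln_2\mathcal{N})^\gamma}1/k=\gamma\ln_3\mathcal{N}$, which appears in both the gain and the cost.

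Your construction has none of these features:
\begin{itemize}
\item Your window $\log L<\mathbf{d}(Q)\le\log L+\log_2 L$ contains only about $\ln_3\mathcal{N}/\ln q$ degrees.
\item The denominator $\log(|Q|/\log L)=\mathbf{d}(Q)-\log_2 L$ of your weight is essentially constant on that window. The Bondarenko--Seip weight, used in Section~5 of the paper, instead decays like $1/(\mathbf{d}(Q)-\log L)$ over $(\ln_2\mathcal{N})^{\gamma}$ degrees.
\item The set you finally count, the $\binom{\#\mathcal{Q}}{k}$ products of $k$ primes from one pool, is a single uniform block.
\end{itemize}
For such a block, put $\rho=k/\#\mathcal{Q}$ and $\Sigma=\sum_{Q\in\mathcal{Q}}|Q|^{-1/2}$. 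Removing $j$ primes of $A$, adding $j$ new ones and optimizing $j$ gives $\ln(S(\mathfrak{R})/|\mathfrak{R}|)\approx 2\sqrt{\rho(1-\rho)}\,\Sigma$. Meanwhile $\ln|\mathfrak{R}|\approx\#\mathcal{Q}\,H(\rho)$, where $H(\rho)=-\rho\ln\rho-(1-\rho)\ln(1-\rho)\ge 4\ln 2\,\rho(1-\rho)$. The pool is dominated by its top degree $D\gg\ln_2\mathcal{N}/\ln q$, so $\Sigma^2\asymp\#\mathcal{Q}/D$. Hence $\ln(S(\mathfrak{R})/|\mathfrak{R}|)\ll\sqrt{\ln\mathcal{N}/\ln_2\mathcal{N}}$ for every choice of $k$, and you lose exactly the factor $\sqrt{\ln_3\mathcal{N}}$.

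Your account of the constant also does not hold as stated. We have $\frac{\sqrt q+1}{\sqrt q-1}=\mathcal{Z}(3/2)^2/\mathcal{Z}(2)$, whereas a ``symmetric'' second factor $\frac{\sqrt q}{\sqrt q-1}$ would give $q/(\sqrt q-1)^2$. In the paper, $E_1$ and $E_2$ (the parts of $A,B$ outside the common divisor $D$) are both drawn from the block primes not dividing $D$, and each brings a $\mathcal{Z}(3/2)$ through $T_k$. The $1/\mathcal{Z}(2)$ comes from the price of shrinking $D$, namely $U_k\ge\frac12|\mathfrak{R}_k|(J_k/P_k)^{j_k}$, with $P_k$ carrying the factor $\mathcal{Z}(2)$. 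Equivalently, the primes removed from a typical $A$ contribute $(J_k/P_k)\Sigma_k$, not $\Sigma_k$. This yields $h=\frac{\mathcal{Z}(3/2)^2e^2c\ln q}{\mathcal{Z}(2)}\frac{\sqrt u-1}{\sqrt u+1}$; one then takes $\alpha=\sqrt h/e$ and lets $c\gamma\ln u\to1$, then $u\to1$.

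Your fallback (take the de la Bret\`eche--Tenenbaum / Doki\'c et al.\ block set and observe that it is squarefree) is essentially the paper's route. There, squarefreeness is immediate, since each $\mathfrak{R}_k$ consists of divisors of the squarefree $N_k$ and distinct blocks use disjoint primes. But you need to write out that construction, not the single-window one.

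Finally, your trimming step is invalid, because deleting elements also lowers the row sums of the elements you keep. Pad instead. If $\mathcal{E}$ is a set of new primes of large degree, coprimality gives $S(\mathfrak{R}\mathcal{E})=S(\mathfrak{R})S(\mathcal{E})\ge|\mathcal{E}|\,S(\mathfrak{R})$, and adding further squarefree elements only increases $S$. This reaches cardinality exactly $\mathcal{N}$ at the cost of a factor $2$. The paper only proves $|\mathfrak{R}|\le\mathcal{N}$, so the same padding is implicit there.
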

The proof of the above proposition follows the general strategy of Proposition~8.1 in ~\cite{dokic2020}. However, in our setting the set $\mathfrak{R}$ is required to consist entirely of monic squarefree polynomials, and hence the construction of the set $\mathfrak{R}$ has to be modified accordingly. For clarity, we divide the proof into two separate parts. In the first subsection, we construct a set $\mathfrak{R}$ of monic squarefree polynomials with the required cardinality properties. In the second subsection, we establish the desired lower bound for the corresponding Gál sum $S(\mathfrak{R})$.

\subsection{Construction of the set \texorpdfstring{$\mathfrak{R}$}{R}}
This paragraph is devoted to the construction of sets $\mathfrak{R}$ for which the sum $S(\mathfrak{R})$ attains large values.  Let $\mathcal{N}$ be a sufficiently large positive integer. We introduce three
parameters
\[
1<u\leq e, \qquad c>1, \qquad 0<\gamma<1,
\]
which will be specified later.
 For
\[
1\leq k\leq (\ln_2\mathcal{N})^{\gamma},
\]
define integers
\[
i_k=
\left[
k\log u+\log\ln\mathcal{N}+\log\ln_2\mathcal{N}
\right].
\]
Introduce the intervals in $\mathcal{M}$
\[
\mathcal{I}_k = \{Q\in \mathcal{M} :  i_k< \mathbf{d}(Q)\leq i_{k + 1}\}.
\]
Let $P_k$ denote the number of monic irreducible polynomials contained in
$\mathcal{I}_k$.  Thus,
   \[
    P_k =\Pi_q(i_{k+1})-\Pi_q(i_k).
   \]
By Lemma \ref{produc of poly},
\be\label{Pk}
   P_k\leq\mathcal{Z}(2)u^{k+1}\ln\mathcal{N}.
\ee
Moreover,
   \[
   \Pi_q(i_k)= \mathcal{Z}(2)\ln q\,u^k\ln\mathcal{N}
\left(1+O\left(\frac{k+\ln_3\mathcal{N}}{\ln_2\mathcal{N}}\right)\right),
 \]
uniformly for
$1\leq k\leq(\ln_2\mathcal{N})^\gamma$.
Consequently,
\begin{equation}\label{Pk=}
   P_k=\mathcal{Z}(2)\ln q\,u^k(u-1)\ln\mathcal{N}
\left(1+O\left(\frac{k+\ln_3\mathcal{N}}{\ln_2\mathcal{N}}\right)\right).
\end{equation}
We also put  
\be\label{Jk=}
J_k:=\left[\frac{c\ln\mathcal{N}}{k^2\ln_3\mathcal{N}}\right]
\qquad (1\leq k\leq(\ln_2\mathcal{N})^\gamma),
\ee
and choose $c>1$  such that 
\begin{equation}\label{condition}
c\gamma\ln u<1.
\end{equation}
For each $k$, define the squarefree polynomial
\[
N_k= \prod_{P\in\mathcal{I}_k\cap\mathcal{P}}P.
\]
We consider the set
\[
\mathfrak{R}_k=\left\{R \in \mathcal{M}:R\mid N_k,\ \omega(R)\leq J_k \right\},
\]
where $\omega(R)$ denotes the number of distinct monic irreducible divisors of
$R$. Since $N_k$ is squarefree, every element of $\mathfrak{R}_k$ is also squarefree.

Finally, define
\begin{equation}\label{set R}
\mathfrak{R}=\left\{R=\prod_{1\leq k\leq(\ln_2\mathcal{N})^\gamma}R_k: R_k\in\mathfrak{R}_k \quad (1\leq k\leq(\ln_2\mathcal{N})^\gamma)
\right\}.
\end{equation}
The sets of irreducible factors arising from distinct intervals
$\mathcal{I}_k$ are disjoint. Hence every polynomial in $\mathfrak{R}$ is monic
and squarefree.

\begin{lem}\label{card R}
Under the condition \eqref{condition}, we have
$ |\mathfrak{R}|\leq \mathcal{N}$.
\end{lem}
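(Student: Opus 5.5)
The plan is to bound $|\mathfrak{R}|$ by a product of binomial sums and show that the logarithm of this product is at most $\ln\mathcal{N}$. Since the prime sets in distinct intervals $\mathcal{I}_k$ are disjoint, the map $(R_k)_k\mapsto\prod_k R_k$ is injective. Hence
\[
|\mathfrak{R}|=\prod_{1\le k\le(\ln_2\mathcal{N})^\gamma}|\mathfrak{R}_k|,
\qquad
|\mathfrak{R}_k|=\sum_{j\le J_k}\binom{P_k}{j}.
\]

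The key steps are as follows.

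\textbf{Bounding each factor.} Note that $J_k\le c\ln\mathcal{N}/\ln_3\mathcal{N}$ is much smaller than $P_k\asymp u^k\ln\mathcal{N}$. We use the elementary bound
\[
\sum_{j\le J}\binom{P}{j}\le \Bigl(\frac{eP}{J}\Bigr)^{J}, \qquad J\le P,
\]
so that $\ln|\mathfrak{R}_k|\le J_k\bigl(1+\ln P_k-\ln J_k\bigr)$. If $J_k=0$ the factor is $1$ and contributes nothing, so only indices with $J_k\ge1$ matter.

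\textbf{Estimating the logarithm.} By \eqref{Pk}, $P_k\le \mathcal{Z}(2)u^{k+1}\ln\mathcal{N}$. By \eqref{Jk=}, $J_k\ge c\ln\mathcal{N}/(2k^2\ln_3\mathcal{N})$ whenever $J_k\ge1$. Therefore
\[
\ln\frac{eP_k}{J_k}\le \ln_4\mathcal{N}+2\ln k+(k+1)\ln u+O(1),
\]
and
\[
\ln|\mathfrak{R}|\le \sum_{k\le(\ln_2\mathcal{N})^\gamma}\frac{c\ln\mathcal{N}}{k^2\ln_3\mathcal{N}}\Bigl(k\ln u+2\ln k+\ln_4\mathcal{N}+O(1)\Bigr).
\]

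\textbf{Comparing with $\ln\mathcal{N}$.} The terms $2\ln k$, $\ln_4\mathcal{N}$ and $O(1)$ are multiplied by the convergent sum $\sum_k k^{-2}$. They therefore contribute $O\bigl(c\ln\mathcal{N}\,\ln_4\mathcal{N}/\ln_3\mathcal{N}\bigr)=o(\ln\mathcal{N})$. The main term is
\[
\frac{c\ln u\,\ln\mathcal{N}}{\ln_3\mathcal{N}}\sum_{k\le(\ln_2\mathcal{N})^\gamma}\frac1k
=\frac{c\ln u\,\ln\mathcal{N}}{\ln_3\mathcal{N}}\bigl(\gamma\ln_3\mathcal{N}+O(1)\bigr)
=c\gamma\ln u\,\ln\mathcal{N}+O\Bigl(\frac{\ln\mathcal{N}}{\ln_3\mathcal{N}}\Bigr).
\]
Putting these together gives
\[
\ln|\mathfrak{R}|\le\bigl(c\gamma\ln u+o(1)\bigr)\ln\mathcal{N}.
\]
Since $c\gamma\ln u<1$ by \eqref{condition}, the right-hand side is at most $\ln\mathcal{N}$ for $\mathcal{N}$ sufficiently large, and so $|\mathfrak{R}|\le\mathcal{N}$.

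The main obstacle is bookkeeping rather than any idea. One must check that the secondary terms ($\ln_4\mathcal{N}$ and the $\ln k$ from $k^2$ in $J_k$) carry the extra factor $1/\ln_3\mathcal{N}$ from $J_k$, so that they are $o(\ln\mathcal{N})$. One must also check that the harmonic sum produces exactly $\gamma\ln_3\mathcal{N}$, so that the strict inequality \eqref{condition} absorbs all error terms.
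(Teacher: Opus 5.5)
Your proposal is correct and is essentially the argument the paper has in mind. The paper omits the proof and cites Lemma~2.2 of de la Bret\`eche--Tenenbaum, but it records exactly your product formula for $|\mathfrak{R}|$ and the bound $|\mathfrak{R}_k|\le (e\mathcal{Z}(2)k^2u^{k+1}\ln_3\mathcal{N})^{J_k}$; summing $J_k$ times the logarithm with $\sum_{k\le(\ln_2\mathcal{N})^\gamma}1/k=\gamma\ln_3\mathcal{N}+O(1)$ then gives $(c\gamma\ln u+o(1))\ln\mathcal{N}$, as you show.

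One point needs tidying. The bound $(eP/J)^J$ requires $J_k\le P_k$, but since $\log u<1$, consecutive $i_k$ can coincide and leave some $\mathcal{I}_k$ empty. In that case (indeed whenever $P_k<J_k$) you have $|\mathfrak{R}_k|=2^{P_k}\le 2^{J_k}$, which your $O(1)$ term already absorbs.
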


We omit the proof of this lemma, since it follows closely the argument used in Lemma~2.2 of \cite{breteche2019}. Nevertheless, we record below several estimates that arise from the same reasoning and will be useful in what follows.  
\be\label{|R|}
|\mathfrak{R}|=\prod_{1\leq k\leq(\ln_2\mathcal{N})^\gamma}
|\mathfrak{R}_k|,
\ee
where 
\[
|\mathfrak{R}_k|=\sum_{0\leq j\leq J_k}\binom{P_k}{j}.
\]
It follows readily that 
\[
|\mathfrak{R}_k|\leq 2\binom{P_k}{J_k}\leq 2 \left(  \frac{eP_k}{J_k}\right)^{J_k} \leq \left( e \mathcal{Z}(2)k^2 u^{k+1} \ln_3\mathcal{N} \right)^{J_k}.
\]

\subsection{Lower bound for \texorpdfstring{$S(\mathfrak{R})$}{R}}
 We have 
 \be\label{SR}
S(\mathfrak{R}) =\prod_{1\leq k \leq (\ln_2 \mathcal{N})^{\gamma}} S(\mathfrak{R}_k).
\ee
Let $A$ and $B$  be two elements of $ \mathfrak{R}_k$. Then 
\[
S(\mathfrak{R}_k)=\sum_{A,B \in \mathfrak{R}_k} \sqrt{\frac{|(A,B)|}{|[A,B]|}}=\sum_{A,B \in \mathfrak{R}_k} \frac{|(A,B)|}{\sqrt{|AB|}}.
\]
Using the identity $|(A,B)| =\sum_{D | (A,B)}\phi (D)$, we have  
\begin{align*}
S(\mathfrak{R}_k) & = \sum_{\substack{D |N_k \\ \omega (D)\leq J_k}} \frac{\phi(D)}{|D|} \sum_{\substack{ E_1,E_2 | N_k \\  \omega(E_1),\omega(E_2) \leq J_k-\omega(D) \\ (E_1,D)=(E_2,D)=1 }} \frac{1}{\sqrt{|E_1E_2|}}\\
& = \sum_{\substack{D |N_k \\ \omega (D)\leq J_k}} \frac{\phi(D)}{|D|} \left(  \sum_{\substack{ E | N_k \\  \omega(E) \leq J_k-\omega(D) \\ (E,D)=1 }} \frac{1}{\sqrt{|E|}} \right)^2.
\end{align*}
Since  for all $D | N_k$, we have 
\[
\frac{\phi(D)}{|D|} \gg \exp \left( - \sum_{P \in \mathcal{I}_k} \frac{1}{|P|} \right),
\]
it follows that 
\be\label{lower SRK}
S(\mathfrak{R}_k)\gg  \sum_{\substack{D |N_k \\ \omega (D)\leq J_k}} \left(  \sum_{\substack{ E | N_k \\  \omega(E) \leq J_k-\omega(D) \\ (E,D)=1 }} \frac{1}{\sqrt{|E|}} \right)^2.
\ee
Let 
\[
j_k:=\left[  \frac{\alpha}{k}  \sqrt{\frac{\ln \mathcal{N}}{\ln_2 \mathcal{N}\ln_3 \mathcal{N}}} \right],
\]
where $\alpha$ is a parameter whose value will be chosen optimally. 
We restrict the summation over $D$ to those polynomials satisfying $ \omega(D)\leq J_k-j_k,$  while the inner summation over $E$ is restricted to polynomials satisfying  $ \omega(E)= j_k.$

Using the same calculation that leads to $ (8.3)$ in \cite{dokic2020}, we similarly obtain
\[
\left(
\sum_{\substack{
E\mid N_k\\
\omega(E)=j_k\\
(E, D)=1
}}
\frac{1}{\sqrt{|E|}}
\right)^2
\gg T_k^{2j_k} e^{o(j_k)},
\]
where 
\[
T_k := \frac{ \mathcal{Z}(\frac{3}{2} ) \,k\,e \ln q \,(\sqrt u -1)   u^{k/2} } {\alpha} \sqrt{\ln_3\mathcal{N}} \left (    1+ O \left (\sqrt{\frac{\ln_2 \mathcal{N}\ln_3 \mathcal{N}}{\ln \mathcal{N}}} \right)   \right).
\]
Hence, we have 
\begin{equation}\label{final Sk}
    S(\mathfrak{R}_k)\gg  T_k^{2j_k} e^{o(j_k)}U_k ,
\end{equation}
with
\begin{align*}
U_k & := \sum_{\substack{D |N_k \\ \omega(D) \leq J_k-j_k}} 1\\
& \geq \frac{1}{2} |\mathfrak{R}_k| \left(\frac{J_k}{P_k} \right)^{j_k}, \quad \text{as} \quad \left( |\mathfrak{R}_k|\leq 2\binom{P_k}{J_k} \right).
\end{align*}
Therefore, it follows from \eqref{final Sk} that 
\[
\frac{ S(\mathfrak{R}_k)}{|\mathfrak{R}_k|}\gg \left( T_k^2 \frac{J_k}{P_k}\right)^{j_k}e^{o(j_k)} = \left ( \frac{h}{\alpha^2} \right)^{j_k} e^{o(j_k)},
\]
where 
\[
h:= \frac{\mathcal{Z}(\frac{3}{2})^2 \, e^2 \,c \,\ln q }{\mathcal{Z}(2)} \frac{\sqrt u -1}{\sqrt u +1}.
\]
Using the above estimate in \eqref{SR}, we obtain
\begin{equation}\label{SR/R}
    \frac{ S(\mathfrak{R})}{|\mathfrak{R}|}\geq \exp \left ( (\beta +o(1) \sqrt{\frac{\ln \mathcal{N} \ln_3 \mathcal{N}}{\ln_2 \mathcal{N}}} \right),
\end{equation}
where $\beta = \alpha \gamma  \ln(h/\alpha^2)$. Choosing the optimal value $\alpha=\sqrt{h}/e$, we obtain
\[
\beta=\frac{2\gamma\sqrt{h}}{e}= 2 \gamma \mathcal{Z}\left ( \frac {3}{2} \right) \sqrt{ \frac{c \ln q }{\mathcal{Z}(2)}\frac{\sqrt u-1}{\sqrt u +1}}.
\]
By letting $c \gamma \log u \to 1$ and then $u\to 1$, we see that $\beta$ can be made arbitrarily close to
\[
\mathcal{Z}\left ( \frac {3}{2} \right) \sqrt{\frac{\ln q}{\mathcal{Z}(2)}} = \sqrt{ \frac{\sqrt q+1}{\sqrt q -1} \ln q   }.
\]
This completes the proof of Proposition \ref{prop}.

\section{Proof of  Theorem \ref{main thm}}\label{proof thm 1}

Recall that, by the construction of the set $\mathfrak{R}$ (see \eqref{set R}), every polynomial $R\in\mathfrak{R}$ is squarefree and, for each $1\leq k\leq (\ln_2\mathcal{N})^\gamma$, its $k$-th component $R_k\in\mathfrak{R}_k$ contains at most
$\frac{c \ln \mathcal{N}}{k^2 \ln_3 \mathcal{N}}$
irreducible factors, all of which belong to  $\mathcal{I}_k\cap\mathcal{P}$ and $|\mathfrak{R}| \leq \mathcal{N}$ from Lemma \ref{card R}. Note that every  $H \in \mathfrak{R}$ satisfies
\[
\mathbf{d}(H)\ll \ln \mathcal{N}\ln_2 \mathcal{N}.
\]
Let $\mathcal{N} \asymp q^{g(1/2 - \epsilon')}$, where $0<\epsilon'<1/2$. For fixed  $P \in \mathcal{P}_{2g+1}$, define the resonator 
\[
\mathcal{R}_{P}=\sum_{H\in \mathfrak{R}} \chi_{P}(H).
\]
 Then, for the resonance method, we consider the following two sums:
 \begin{equation*}
 	\mathcal{S}_1 := \sum_{P \in \mathcal{P}_{2g+1}}\mathcal{R}_P^2,\qquad \text{and} \qquad \mathcal{S}_2 := \sum_{P \in \mathcal{P}_{2g+1}} L\left(\frac{1}{2}, \chi_P\right)\mathcal{R}_P^2   .
 \end{equation*}
Therefore, we have 
\begin{equation}\label{max L}
\max_{P \in \mathcal{P}_{2g+1}} \left| L(1/2, \chi_P) \right| \ge \frac{\mathcal{S}_2}{\mathcal{S}_1}.
\end{equation}
It remains to establish a lower bound for $\mathcal{S}_2$ and an upper bound for $\mathcal{S}_1$. We begin with the lower bound for $\mathcal{S}_2$. By Lemma \ref{ApproximateFE}, we get
\begin{align*}
\mathcal{S}_2 &= \sum_{P \in \mathcal{P}_{2g+1}} L\left(\frac{1}{2}, \chi_P\right) \mathcal{R}_{P}^2 \nonumber \\
&= \sum_{M,N \in \mathfrak{R}}
   \sum_{K \in \mathcal{M}_{\le g}} \frac{1}{\sqrt{|K|}}
   \sum_{P \in \mathcal{P}_{2g+1}} \chi_P(K M N) \nonumber 
\quad + \sum_{M,N \in \mathfrak{R}}
   \sum_{K \in \mathcal{M}_{\le g-1}} \frac{1}{\sqrt{|K|}}
   \sum_{P \in \mathcal{P}_{2g+1}} \chi_P(K M N) \nonumber \\
&=:\mathcal{S}_{21} + \mathcal{S}_{22}.
\end{align*}
We now split the innermost character sum, following Darbar and Maiti \cite{darbar2024}, according to whether $KMN$ is a perfect square. Applying Lemma \ref{char sum} to the square contribution and Lemma \ref{weil} to the non-square contribution, we obtain
\begin{align}
\mathcal{S}_{21} &= |\mathcal{P}_{2g+1}| \sum_{M,N \in \mathfrak{R}}
\sum_{\substack{K \in \mathcal{M}_{\le g} \\ KMN= \square}} \frac{1}{\sqrt{|K|}}
+ O\!\left( \frac{q^g}{g} \sum_{\substack{K \in \mathcal{M}_{\le g} \\ M,N \in \mathfrak{R} \\ KMN \neq \square}} 
\frac{\mathbf{d}(KMN)}{\sqrt{|K|}} \right), \nonumber \\
\mathcal{S}_{22} &= |\mathcal{P}_{2g+1}| \sum_{M,N \in \mathfrak{R}}
\sum_{\substack{K \in \mathcal{M}_{\le g-1} \\ KMN= \square}} \frac{1}{\sqrt{|K|}}
+ O\!\left( \frac{q^g}{g} \sum_{\substack{K \in \mathcal{M}_{\le g-1} \\ M,N \in \mathfrak{R} \\ KMN \neq \square}} 
\frac{\mathbf{d}(KMN)}{\sqrt{|K|}} \right). \nonumber
\end{align}
We now estimate $\mathcal{S}_{21}$. The bound
\[
\mathbf{d}(KMN) \ll g+\ln \mathcal{N}\ln_2 \mathcal{N},
\]
allows us to control the non-square contribution due to Lemma~\ref{weil}. Therefore, 
\begin{equation*}
\mathcal{S}_{21} = |\mathcal{P}_{2g+1}| \sum_{M,N \in \mathfrak{R}}
\sum_{\substack{K \in \mathcal{M}_{\le g} \\ KMN = \square}} \frac{1}{\sqrt{|K|}}
+ O\!\left( q^{3g/2} \mathcal{N}^2 (g + \ln \mathcal{N} \ln_2 \mathcal{N}) \right).
\end{equation*}
Using Lemma \ref{prime pol thm}, we have 
\begin{equation}\label{S21}
\mathcal{S}_{21} = |\mathcal{P}_{2g+1}| \left(  \sum_{M,N \in \mathfrak{R}}
\sum_{\substack{K \in \mathcal{M}_{\le g} \\ KMN = \square}} \frac{1}{\sqrt{|K|}}
+ O\!\left( \frac {\mathcal{N}^2 (g + \ln \mathcal{N} \ln_2 \mathcal{N})}{q^{g/2}} \right) \right).
\end{equation}
Let the main term be denoted by
\[
\mathcal{D}
:=
\sum_{M,N\in\mathfrak{R}}
\sum_{\substack{K\in\mathcal{M}_{\leq g}\\ KMN=\square}}
\frac{1}{\sqrt{|K|}}.
\]
Since all the terms under consideration are non-negative, we may restrict the sum to the contribution corresponding to
\[
K=\frac{[M,N]}{(M,N)}.
\]
Indeed, for this choice we have
\[
KMN=\frac{[M,N]}{(M,N)}\,MN= [M,N]^2,
\]
and hence $KMN$ is a perfect square. Therefore, retaining only these terms yields the lower bound
\begin{align*}
\mathcal{D}  \geq 
\sum_{M,N \in \mathfrak{R}} \sqrt{\frac{|(M,N)|}{|[M,N]|}}. 
\end{align*}
It follows from Proposition \ref{prop} that
\begin{equation}\label{D}
  \mathcal{D} \geq  \mathcal{N} \exp\left( \left(\sqrt{\frac{\sqrt{q}+1}{\sqrt{q}-1}\ln q}+o(1)\right)\sqrt{\frac{\ln\mathcal{N} \ln_3 \mathcal{N}}{\ln_2 \mathcal{N}}}\right).
\end{equation}
Hence, substituting \eqref{D} into \eqref{S21}, we get
\begin{equation*}
    \mathcal{S}_{21}  \geq |\mathcal{P}_{2g+1}| \left(  
       \mathcal{N} \exp\left( \left(\sqrt{\frac{\sqrt{q}+1}{\sqrt{q}-1}\ln q}+o(1)\right)\sqrt{\frac{\ln\mathcal{N} \ln_3 \mathcal{N}}{\ln_2 \mathcal{N}}}\right)   
+O\!\left( \frac {\mathcal{N}^2 (g + \ln \mathcal{N} \ln_2 \mathcal{N})}{q^{g/2}} \right) \right).
\end{equation*}
Similarly, $\mathcal{S}_{22}$ satisfies the same lower bound as $\mathcal{S}_{21}$.
Therefore,
\begin{equation*}
    \mathcal{S}_{2}  \gg |\mathcal{P}_{2g+1}| \left(  
       \mathcal{N} \exp\left( \left(\sqrt{\frac{\sqrt{q}+1}{\sqrt{q}-1}\ln q}+o(1)\right)\sqrt{\frac{\ln\mathcal{N} \ln_3 \mathcal{N}}{\ln_2 \mathcal{N}}}\right)   
+O\!\left( \frac {\mathcal{N}^2 (g + \ln \mathcal{N} \ln_2 \mathcal{N})}{q^{g/2}} \right) \right).
\end{equation*}
For the choice
$
\mathcal{N} \asymp q^{g(1/2-\epsilon')},
$
where \(0<\epsilon'<1/2\) is arbitrarily small, we have
\begin{equation}\label{final est S2}
	\mathcal{S}_{2}  \gg  (1+o(1)) \mathcal{N}|\mathcal{P}_{2g+1}| \exp\left( \left(\sqrt{\frac{\sqrt{q}+1}{\sqrt{q}-1}\ln q}+o(1)\right)\sqrt{\frac{\ln\mathcal{N} \ln_3 \mathcal{N}}{\ln_2 \mathcal{N}}}\right).
\end{equation}
We now turn to the derivation of an upper bound for $\mathcal{S}_1$. Since $\mathfrak{R}$ consists of squarefree polynomials, the condition $MN=\square$
 forces $M=N$. Therefore, following the preceding arguments, we obtain
\begin{align*}
\mathcal{S}_1 &= \sum_{P \in \mathcal{P}_{2g+1}} \mathcal{R}_P^2=\sum_{M,N \in \mathfrak{R}_{}} \sum_{P \in \mathcal{P}_{2g+1}} \chi_{P}(MN) \\
 &= |\mathcal{P}_{2g+1}|\sum_{\substack{M,N \in \mathfrak{R} \\  MN = \square}} 1 +  O\!\left( \frac{q^g}{g} \sum_{\substack{ M,N \in \mathfrak{R} \\  MN \neq \square}} 
\mathbf{d}(MN) \right)\\
& \leq |\mathcal{P}_{2g+1}| \left(
 \mathcal{N} 
+ O\!\left( \frac {\mathcal{N}^2 (g + \ln \mathcal{N} \ln_2 \mathcal{N})}{q^{g}} \right) \right).
\end{align*}
With the above choice
$
\mathcal{N} \asymp q^{g(1/2-\epsilon')},
$
it follows that
\begin{equation}\label{final esti S1}
\mathcal{S}_1 \le (1+o(1))\,\mathcal{N}\,|\mathcal{P}_{2g+1}|.	
\end{equation}
Consequently, for  arbitrarily small $\epsilon'>0$ and  sufficiently large $g$ , we obtain 
\begin{align*}
    \max_{P \in \mathcal{P}_{2g+1}} \left| L(1/2, \chi_P) \right| \ge \frac{\mathcal{S}_2}{\mathcal{S}_1} \gg \exp\left( \left(\sqrt{\frac{\sqrt{q}+1}{\sqrt{q}-1}\ln q}+o(1)\right)\sqrt{\frac{\ln\mathcal{N} \ln_3 \mathcal{N}}{\ln_2 \mathcal{N}}}\right) \\
    =\exp \left( \left( \sqrt{\frac{(1/2-\epsilon')(\sqrt{q}+1)}{\sqrt{q}-1} } \, \ln q + o(1) \right) 
\sqrt{\frac{g \ln_2 g}{\ln g}} \right),
\end{align*}
this concludes the proof of Theorem~\ref{main thm}.

\section{Proof of Theorem \ref{large near central line} }\label{proof thm 2}

\subsection{Construction of the Resonator} 
Let $\mathcal{N}$ be a sufficiently large number, which will be chosen later. We take
\[
\sigma=\frac{1}{2}+\frac{\kappa}{\ln_2 \mathcal{N}}.
\]
Based on the approaches in \cite{bondarenko2017,chirre2019,darbar2024,dong2025}, we consider the following construction. Let $a'\in(1,\infty)$ and $\gamma'\in(0,1)$ be two fixed real parameters. We then define $\mathbb{P}$ to be the collection of
irreducible polynomials $P$ satisfying
\[
\log\!\left(q\ln \mathcal{N}\ln_2 \mathcal{N}\right)
<
\mathbf{d}(P)
\leq
\log\!\left(
q^{(\ln_2\mathcal{N})^{\gamma'}}\ln \mathcal{N}\ln_2\mathcal{N}
\right).
\]
We introduce a multiplicative function $\psi$ supported only on squarefree polynomials, and for every $P\in\mathbb{P}$ we set
\[
\psi(P)
=
{
\frac{(\ln\mathcal{N})^{1-\sigma}\,(\ln_2 \mathcal{N})^{\sigma}}{(\ln_3\mathcal{N})^{1-\sigma}}
}\, \frac{1}
{|P|^{\sigma}
\left(
\mathbf{d}(P)-\log(\ln \mathcal{N}\ln_2 \mathcal{N})
\right)}.
\]
For $k=1,\ldots,
\left[(\ln_2 \mathcal{N})^{\gamma'}\right],$
let $\mathbb{P}_k$ be the set of all irreducible polynomials $P$ such
that
\[
\log\!\left(
q^k\ln \mathcal{N}\ln_2\mathcal{N}
\right)
<
\mathbf{d}(P)
\leq
\log\!\left(
q^{k+1}\ln \mathcal{N}\ln_2 \mathcal{N}
\right).
\]
Fix $ 1<a'<{1}/{\gamma'}$. Then let $\mathcal{R}_k$ be the set of polynomials that have at least
\[
\Theta_k:=\frac{a'(\ln \mathcal{N})^{2-2\sigma}}{k^2(\ln_3  \mathcal{N})^{2-2\sigma}}
\]
irreducible factors in $\mathbb{P}_k$, and let $\mathcal{R}'_k$ consist
 of  the polynomials in  $\mathcal{R}_k$ whose  irreducible factors are all
 in $\mathbb{P}_k$. At the end, we define 
\[
\mathcal{R}
:=
\operatorname{supp}(\psi)
\setminus
\bigcup_{k=1}^{\left[(\ln_2 \mathcal{N})^{\gamma'}\right]}
\mathcal{R}_k.
\]
Thus, $\mathcal{R}$ consists of squarefree polynomials that have at most $\Theta_k$ irreducible factors from each set $\mathcal{R}_k$. Note that every  $H\in\mathcal{R}$ satisfies
\[
\mathbf{d}(H)\ll \ln \mathcal{N}\,\ln_2\mathcal{N}.
\]

\begin{rem}
In the long resonator constructions of \cite{bondarenko2018} and
\cite{breteche2019}, one introduces a suitable subset
$\mathcal{R}'\subset\mathcal{R}$ to control certain local
contributions. However, as observed in \cite[Remark~2]{darbar2024}, in the
present function field setting no such further refinement is required,
and we may simply take $\mathcal{R}'=\mathcal{R}$, since for any
$L\in\mathcal{R}'$,
\[
r(L)^2
=
\sum_{\substack{H\in\mathcal{R}\\ HL=\square}}
\psi(H)^2
=
\psi(L)^2
\quad\Longleftrightarrow\quad
r(L)=\psi(L),
\]
where $r(L)$ is a positive-valued function on $\mathcal{R}'$.
\end{rem}

\begin{lem}
	Under the condition  $ 1<a'<{1}/{\gamma'}$ and for sufficiently large $\mathcal{N}$, we have $ |\mathcal{R}|\leq \mathcal{N}.$
\end{lem}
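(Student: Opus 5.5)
The plan is to bound $|\mathcal{R}|$ by a product of binomial sums, one for each block $\mathbb{P}_k$, as in Lemma~\ref{card R} and \cite[Lemma~2.2]{breteche2019}, and to check that the exponent is at most $\ln\mathcal{N}$.

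\textbf{Step 1: factor over blocks.} Every $H\in\mathcal{R}$ is squarefree and supported on $\mathbb{P}=\bigcup_k\mathbb{P}_k$, and the blocks are disjoint. Such an $H$ is determined by choosing, for each $k$, a subset of $\mathbb{P}_k$ of size less than $\Theta_k$. Hence
\[
|\mathcal{R}|\le \prod_{k\le(\ln_2\mathcal{N})^{\gamma'}}\ \sum_{0\le j\le\Theta_k}\binom{|\mathbb{P}_k|}{j}
\le \prod_{k}2\Big(\frac{e|\mathbb{P}_k|}{\Theta_k}\Big)^{\Theta_k}.
\]
This uses $\Theta_k\le|\mathbb{P}_k|/2$, which I will verify from the sizes in Step~2.

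\textbf{Step 2: estimate $|\mathbb{P}_k|$ and $\Theta_k$.} The set $\mathbb{P}_k$ consists of the primes whose degree lies in a window of length one just above $k+\log(\ln\mathcal{N}\ln_2\mathcal{N})$. By Lemma~\ref{prime pol thm} (or Lemma~\ref{produc of poly}), uniformly for $k\le(\ln_2\mathcal{N})^{\gamma'}$,
\[
|\mathbb{P}_k|\ll \frac{q^{k+1}\ln\mathcal{N}\ln_2\mathcal{N}}{\log(\ln\mathcal{N}\ln_2\mathcal{N})}\ll q^{k}\ln\mathcal{N}.
\]
For $\Theta_k$, write $2-2\sigma=1-2\kappa/\ln_2\mathcal{N}$. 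Then $(\ln\mathcal{N})^{2-2\sigma}=e^{-2\kappa}\ln\mathcal{N}$ and $(\ln_3\mathcal{N})^{2-2\sigma}=(1+o(1))\ln_3\mathcal{N}$. So
\[
\Theta_k=(1+o(1))\,\frac{a'e^{-2\kappa}\ln\mathcal{N}}{k^2\ln_3\mathcal{N}}
\le (1+o(1))\,\frac{a'\ln\mathcal{N}}{k^2\ln_3\mathcal{N}},
\]
since $\kappa\ge 0$. Consequently $\ln(e|\mathbb{P}_k|/\Theta_k)\le k\ln q+2\ln k+\ln_4\mathcal{N}+O(1)$.

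\textbf{Step 3: sum the exponents.} Taking logarithms,
\[
\ln|\mathcal{R}|\le O\big((\ln_2\mathcal{N})^{\gamma'}\big)+(1+o(1))\frac{a'e^{-2\kappa}\ln\mathcal{N}}{\ln_3\mathcal{N}}\sum_{k\le(\ln_2\mathcal{N})^{\gamma'}}\Big(\frac{\ln q}{k}+\frac{2\ln k+\ln_4\mathcal{N}+O(1)}{k^2}\Big).
\]
The second part of the sum contributes $O(\ln_4\mathcal{N})$, which is $o(\ln_3\mathcal{N})$. The harmonic part contributes $\gamma'\ln q\,\ln_3\mathcal{N}$. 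So $\ln|\mathcal{R}|\le(1+o(1))\,a'\gamma'e^{-2\kappa}\ln q\,\ln\mathcal{N}$.

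\textbf{Main obstacle: normalising the harmonic sum.} The whole argument turns on the normalisation of the dominant term $\sum_k\Theta_k\,k$. With the stated condition $a'\gamma'<1$ one needs this term to be $(a'\gamma'+o(1))\ln\mathcal{N}$, so the factor $\ln q$ above must be accounted for. I would first recheck the block widths. If the blocks are measured in base $q$, consistently with $\mathbf{d}(P)$, then $\ln(|\mathbb{P}_k|/\Theta_k)$ grows like $k\ln q$, and the $\ln q$ has to be absorbed. There are two ways to do this: interpret $\ln_3\mathcal{N}$ and the range of $k$ in base $q$ (so that $\sum_k 1/k$ produces $\gamma'\log_3\mathcal{N}$), or read the condition as $a'\gamma'\ln q<1$. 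Once this bookkeeping is fixed, the slack $e^{-2\kappa}\le1$ and $a'\gamma'<1$ give $\ln|\mathcal{R}|\le(1-\delta)\ln\mathcal{N}$ for some $\delta>0$. Hence $|\mathcal{R}|\le\mathcal{N}$ for $\mathcal{N}$ large.
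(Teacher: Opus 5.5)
Your Steps 1--3 follow the same route as the paper: a product over the blocks of $\sum_{j\le\Theta_k}\binom{|\mathbb{P}_k|}{j}$, the bound $|\mathbb{P}_k|\ll q^k\ln\mathcal{N}$, and the harmonic sum over $k\le(\ln_2\mathcal{N})^{\gamma'}$. Your conclusion $\ln|\mathcal{R}|\le\bigl(a'\gamma'e^{-2\kappa}\ln q+o(1)\bigr)\ln\mathcal{N}$ is correct. The paper reaches $(a'\gamma'+o(1))\ln\mathcal{N}$ only because its first display, $|\mathcal{R}|\le\exp\bigl((a'+o(1))\frac{(\ln\mathcal{N})^{2-2\sigma}}{(\ln_3\mathcal{N})^{2-2\sigma}}\sum_k\frac1k\bigr)$, is imported from Chirre's Lemma~8. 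There the $k$-th block is $e^kL<p\le e^{k+1}L$, so $\ln(|P_k|/\Theta_k)\approx k$. Here the blocks are $q^k$-scaled, so $\ln(e|\mathbb{P}_k|/\Theta_k)=k\ln q+O(\ln k+\ln_4\mathcal{N})$, and the paper silently loses the factor $\ln q$. The obstacle you flagged is therefore a real defect of the paper's proof, not a slip in your bookkeeping.

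Nevertheless, your proposal does not prove the lemma as stated, and no argument can, because your upper bound is sharp. For $k<K:=[(\ln_2\mathcal{N})^{\gamma'}]$ one has $\mathbb{P}_k\subseteq\mathbb{P}$, $\Theta_k\to\infty$ and $|\mathbb{P}_k|\gg q^k\ln\mathcal{N}$. Take exactly $\lceil\Theta_k\rceil-1$ primes from each such block and use $\binom{n}{j}\ge(n/j)^j$:
\[
\ln|\mathcal{R}|\ \ge\ \sum_{k<K}(\Theta_k-1)\bigl(k\ln q-O(1)\bigr)\ =\ \bigl(a'\gamma'e^{-2\kappa}\ln q+o(1)\bigr)\ln\mathcal{N}.
\]
Since $\ln q\ge\ln 3>1$, you can take $a'\gamma'$ close to $1$ and $\kappa$ small, which $1<a'<1/\gamma'$ permits. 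Then $a'\gamma'e^{-2\kappa}\ln q>1$, so $|\mathcal{R}|>\mathcal{N}$ for all large $\mathcal{N}$.

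Your first proposed repair does not work. Since $\sum_{k\le x}1/k=\ln x+O(1)$ whatever base is used to write $x$, a range $k\le(\log_2\mathcal{N})^{\gamma'}$ still gives $\gamma'\ln_3\mathcal{N}+O(1)$. Putting $\log_3\mathcal{N}\sim\ln_3\mathcal{N}/\ln q$ in the denominator of $\Theta_k$ only makes $\Theta_k$ larger.

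Your second repair is the right one. Assume $a'\gamma'e^{-2\kappa}\ln q<1$, or alternatively divide $\Theta_k$ by $\ln q$ and keep $1<a'<1/\gamma'$; then your Steps 1--3 are a complete proof. This is not a local change, though. The paper drops the same $\ln q$ in its bound for $\sum_{P\in\mathbb{P}_k}\psi(P)^2$: the integral of $dx/(k^2x)$ over a unit interval near $x\approx\ln_2\mathcal{N}/\ln q$ is about $\ln q/(k^2\ln_2\mathcal{N})$, not $1/(k^2\ln_2\mathcal{N})$. Lemma~\ref{not R o(N)} is calibrated so that $\Theta_k$ exceeds that sum by the factor $a'>1$. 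So the parameters in Section~\ref{proof thm 2} have to be rebalanced together.
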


\begin{proof}
By the definition  of $\mathcal{R}$, it follows that 
\[
|\mathcal{R}|
\leq
\prod_{k=1}^{\left [  (\ln_2 \mathcal{N})^{\gamma'}\right]}
\sum_{j=0}^{ \left [ 
\Theta_k \right]}
\binom{|\mathbb{P}_k|}{j}.
\]
Prime Polynomial Theorem gives $ |\mathcal{P}_k|
\leq \mathcal{Z}(2)\,q^{k+1}\ln \mathcal{N}.$ Using this estimate together with the arguments of ~\cite[ Lemma~$8$]{chirre2019},
we obtain
\begin{align*}
  |\mathcal{R}| &
\leq
\exp\left((a'+o(1)
\frac{(\ln \mathcal{N})^{2-2\sigma}}{(\ln_3  \mathcal{N})^{2-2\sigma}}
\sum_{k=1}^{\left[ (\ln_2 N)^{\gamma'}\right]}
\frac{1}{k}
\right)\\
& \leq \exp\left ( (a'\gamma' +o(1)) ( \ln \mathcal{N})^{2-2\sigma}( \ln_3 \mathcal{N})^{2\sigma-1} \right) \\
& \leq \exp\left ( (a'\gamma' +o(1))  \ln \mathcal{N} ( \ln_3 \mathcal{N})^{2\kappa/\ln_2\mathcal{N}} \right) \leq \mathcal{N}.
\end{align*}
\end{proof}
Let us define
\begin{align}
\mathcal{A}_{\mathcal{N}} & :=
\frac{1}{\displaystyle\sum_{H\in\mathcal{M}}\psi(H)^2}
\sum_{F\in\mathcal{M}}
\frac{\psi(F)}{|F|^{\sigma}}
\sum_{G\mid F}
\psi(G)|G|^{\sigma}\nonumber\\
& = \prod_{P\in\mathbb{P}}
\frac{1+\psi(P)^2+\psi(P)|P|^{-\sigma}}{1+\psi(P)^2}.
\end{align}
We now prove the following lemmas by adapting the arguments used in the proofs of Lemmas~2--4 of \cite{darbar2024}.

\begin{lem}\label{AN>}
 We have
\[\mathcal{A}_\mathcal{N}\geq\exp\left( (\gamma' e^{-2\kappa}+o(1))
\frac{(\ln \mathcal{N})^{1-\sigma}\,(\ln_3 \mathcal{N})^{\sigma}}{(\ln_2\mathcal{N})^{1-\sigma}}
\right), \quad \text{as} \quad  \mathcal{N} \to \infty.
\]
\end{lem}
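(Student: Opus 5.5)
We work directly with the Euler product
\[
\mathcal{A}_\mathcal{N}=\prod_{P\in\mathbb{P}}\Bigl(1+\frac{\psi(P)|P|^{-\sigma}}{1+\psi(P)^2}\Bigr).
\]
The plan is to show that on $\mathbb{P}$ the quantity $\psi(P)$ is small. Then each factor has logarithm $(1+o(1))\psi(P)|P|^{-\sigma}$, and it remains to sum over $P\in\mathbb{P}$ using the Prime Polynomial Theorem (Lemma~\ref{prime pol thm}).

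\textbf{Step 1: $\psi(P)=o(1)$ uniformly on $\mathbb{P}$.} Put $L=\ln\mathcal{N}\ln_2\mathcal{N}$. For $P\in\mathbb{P}$ we have $|P|>qL$ and $\mathbf{d}(P)-\log L\geq 1$. Hence
\[
\psi(P)\le \frac{(\ln\mathcal{N})^{1-\sigma}(\ln_2\mathcal{N})^{\sigma}}{(\ln_3\mathcal{N})^{1-\sigma}(\ln\mathcal{N}\ln_2\mathcal{N})^{\sigma}},
\]
and this is $\ll(\ln\mathcal{N})^{1-2\sigma}(\ln_3\mathcal{N})^{\sigma-1}$. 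Since $(\ln\mathcal{N})^{1-2\sigma}=e^{-2\kappa}$, this is $o(1)$ because of the factor $(\ln_3\mathcal{N})^{\sigma-1}$. Using $\ln(1+x)\ge x-x^2/2$, we get
\[
\ln\mathcal{A}_\mathcal{N}\ge(1+o(1))\sum_{P\in\mathbb{P}}\psi(P)|P|^{-\sigma}.
\]

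\textbf{Step 2: evaluating the sum.} Write $C_\mathcal{N}=(\ln\mathcal{N})^{1-\sigma}(\ln_2\mathcal{N})^{\sigma}(\ln_3\mathcal{N})^{\sigma-1}$. Group the primes by degree $n=\mathbf{d}(P)$ and write $n=\log L+m$ with $1\le m\le(\ln_2\mathcal{N})^{\gamma'}$, ignoring rounding of $\log L$. By Lemma~\ref{prime pol thm}, $\pi_q(n)=(1+o(1))q^n/n$ and $|P|^{-2\sigma}=q^{-2\sigma n}$. This gives
\[
\sum_{P\in\mathbb{P}}\psi(P)|P|^{-\sigma}=(1+o(1))\,C_\mathcal{N}\sum_{m}\frac{q^{n(1-2\sigma)}}{n\,m}.
\]
Since $n\le\log L+(\ln_2\mathcal{N})^{\gamma'}$ and $\gamma'<1$, we have $n=(1+o(1))\log L=(1+o(1))\ln_2\mathcal{N}/\ln q$.

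Next, $q^{n(1-2\sigma)}=e^{-2\kappa n\ln q/\ln_2\mathcal{N}}$. Here $n\ln q=\ln L+m\ln q$, and $\ln L/\ln_2\mathcal{N}=1+o(1)$. Also $m\ln q/\ln_2\mathcal{N}\le (\ln q)(\ln_2\mathcal{N})^{\gamma'-1}=o(1)$. Hence $q^{n(1-2\sigma)}=e^{-2\kappa}(1+o(1))$ uniformly in $m$.

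Finally, $\sum_{m\le(\ln_2\mathcal{N})^{\gamma'}}1/m=(\gamma'+o(1))\ln_3\mathcal{N}$. Collecting everything:
\[
\ln\mathcal{A}_\mathcal{N}\ge(\gamma'e^{-2\kappa}+o(1))\,C_\mathcal{N}\,\frac{\ln q\,\ln_3\mathcal{N}}{\ln_2\mathcal{N}},
\]
and $C_\mathcal{N}\ln_3\mathcal{N}/\ln_2\mathcal{N}$ equals $(\ln\mathcal{N})^{1-\sigma}(\ln_3\mathcal{N})^{\sigma}(\ln_2\mathcal{N})^{\sigma-1}$, which is exactly the shape in the lemma.

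\textbf{Main obstacle: the factor $\ln q$.} It comes from converting $1/n$ into $\ln q/\ln_2\mathcal{N}$, and it does not appear in the stated constant. Since $\ln q>1$ for $q\ge 3$, the bound above is at least as strong as the lemma, so a lower bound with constant $\gamma'e^{-2\kappa}$ still follows. I would check this bookkeeping, including any normalization in $\psi$, carefully. I would also check uniformity of the Prime Polynomial Theorem error, which is harmless because $n\to\infty$. The rounding of $\log L$ to an integer changes $m$ by at most $1$, which affects only the first term of the harmonic sum and so contributes $o(\ln_3\mathcal{N})$.
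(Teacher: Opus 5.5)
Your proposal is correct and follows essentially the same route as the paper: show that $\psi(P)=o(1)$ uniformly on $\mathbb{P}$ (the paper uses $\psi(P)<(\ln_3\mathcal{N})^{\sigma-1}$), so that $\ln\mathcal{A}_\mathcal{N}=(1+o(1))\sum_{P\in\mathbb{P}}\psi(P)|P|^{-\sigma}$; then apply the Prime Polynomial Theorem, bound $q^{(1-2\sigma)\mathbf{d}(P)}\ge e^{-2\kappa+o(1)}$, and evaluate what remains (the paper via $\int dx/(x(x-\log(\ln\mathcal{N}\ln_2\mathcal{N})))$, you via a harmonic sum). Your bookkeeping of the extra $\ln q$ is also right: the paper's integral equals $(\gamma'\ln q+o(1))\ln_3\mathcal{N}/\ln_2\mathcal{N}$ as well, and the lemma as stated simply drops the factor $\ln q>1$, which is harmless for a lower bound.
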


\begin{proof}
Since $\psi(P)<(\ln_3 \mathcal{N})^{\sigma-1}
$ for all $P\in\mathbb{P}$, we get
\[ \mathcal{A}_\mathcal{N} = \exp\left(
(1+o(1))
\sum_{P\in\mathbb{P}}
\frac{\psi(P)}{|P|^{\sigma}}
\right).
\]
By the definition of $\psi(P)$, we obtain
\begin{align*}
\sum_{P\in\mathbb{P}}
\frac{\psi(P)}{|P|^{\sigma}}
&=
\frac{(\ln \mathcal{N})^{1-\sigma}\,(\ln_2 \mathcal{N})^{\sigma}}{(\ln_3\mathcal{N})^{1-\sigma}}
\sum_{P\in\mathbb{P}}
|P|^{-2 \sigma}
\left(
\mathbf{d}(P)-\log(\ln \mathcal{N}\ln_2 \mathcal{N})
\right)^{-1}.
\end{align*}
Using the Prime Polynomial Theorem, we obtain
\begin{align*}
\sum_{P\in\mathbb{P}}
\frac{\psi(P)}{|P|^{\sigma}}
&=
(1+o(1))
\frac{(\ln \mathcal{N})^{1-\sigma}\,(\ln_2 \mathcal{N})^{\sigma}}{(\ln_3\mathcal{N})^{1-\sigma}}
\\
&\qquad\times
\sum_{
\substack{
\log(q\ln\mathcal{N}\ln_2 \mathcal{N})<m \leq
\log(q^{(\ln_2 \mathcal{N})^\gamma{'}}\ln \mathcal{N}\ln_2\mathcal{N})
}
}
\frac{q^{(1-2\sigma)m}}{
m\left(
m-\log(\ln \mathcal{N}\ln_2 \mathcal{N})
\right)
}.
\end{align*} 
Since $\sigma=1/2+\kappa/\ln_2 \mathcal{N}$ and $ |P| \leq q^{(\ln_2 \mathcal{N})^{\gamma'}} \ln \mathcal{N}\ln_2 \mathcal{N} $, so
\[
q^{(1-2\sigma)m}\geq \exp \left(  -\frac{2\kappa}{ \ln_2\mathcal{N}} \left( ( \ln_2 \mathcal{N})^{\gamma'}\,\, \ln q + \ln_2 \mathcal{N} +\ln_3 \mathcal{N}  \right) \right) =e^{-2\kappa+o(1)}.
\]
Therefore,
\begin{align*}
\sum_{P\in\mathbb{P}}
\frac{\psi(P)}{|P|^{\sigma}}
&=
(1+o(1))
\frac{(\ln \mathcal{N})^{1-\sigma}\,(\ln_2 \mathcal{N})^{\sigma}}{(\ln_3\mathcal{N})^{1-\sigma}}e^{-2\kappa}
\\
&\qquad\times
\int_{\log(q\ln\mathcal{N}\ln_2 \mathcal{N})}^{
\log(q^{(\ln_2 \mathcal{N})^\gamma {'}}\ln \mathcal{N}\ln_2\mathcal{N})
}
\frac{dx}{
x\left(
x-\log(\ln N\ln_2 N)
\right)
}.
\end{align*}
Now, using the same argument as in the proof of \cite[Lemma~2]{darbar2024}, we obtain
\[
\sum_{P\in\mathbb{P}}
\frac{\psi(P)}{|P|^{\sigma}}
\geq
(\gamma' e^{-2\kappa} +o(1))
\frac{(\ln \mathcal{N})^{1-\sigma}\,(\ln_2 \mathcal{N})^{\sigma}}{(\ln_3\mathcal{N})^{1-\sigma}} \frac{\ln_3\mathcal{N}}{\ln_2 \mathcal{N}}.
\]
Therefore,
\[
\mathcal{A}_\mathcal{N} \geq \exp\left( (\gamma' e^{-2\kappa} +o(1))
\frac{(\ln \mathcal{N})^{1-\sigma}\,(\ln_3 \mathcal{N})^{\sigma}}{(\ln_2\mathcal{N})^{1-\sigma}}
\right),
\]
which completes the proof.
\end{proof}

\begin{lem}\label{not R o(N)}
 We have
\[
\frac{1}{\displaystyle\sum_{H\in\mathcal{M}}\psi(H)^2}
\sum_{\substack{F\in\mathcal{M}\\ F\notin\mathcal{R}}}
\frac{\psi(F)}{|F|^{\sigma}}
\sum_{G\mid F}
\psi(G)|G|^{\sigma}= o(\mathcal{A}_\mathcal{N}), \quad \text{as} \quad  \mathcal{N} \to \infty.
\]
\end{lem}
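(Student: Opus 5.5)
We will use the standard Rankin-type argument from the long resonator method (as in \cite{bondarenko2018}, \cite{chirre2019}, and Lemma~3 of \cite{darbar2024}). Since $\psi$ is supported on squarefree polynomials whose prime factors lie in $\mathbb{P}=\bigcup_k\mathbb{P}_k$, a polynomial $F\in\operatorname{supp}(\psi)$ fails to lie in $\mathcal{R}$ exactly when $F\in\mathcal{R}_k$ for some $k$. In other words, $F$ has at least $\Theta_k$ irreducible factors in $\mathbb{P}_k$. The plan is to bound the left-hand side by
\[
\sum_{k=1}^{[(\ln_2\mathcal{N})^{\gamma'}]}\frac{1}{\sum_{H}\psi(H)^2}\sum_{F\in\mathcal{R}_k}\frac{\psi(F)}{|F|^{\sigma}}\sum_{G\mid F}\psi(G)|G|^{\sigma}.
\]
For a fixed $k$, we insert the weight $b^{\Omega_k(F)-\Theta_k}\geq 1$ with a parameter $b>1$. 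Here $\Omega_k(F)$ denotes the number of prime factors of $F$ lying in $\mathbb{P}_k$. This weight is multiplicative, so the full sum factors as an Euler product over $P\in\mathbb{P}$. Dividing by the product defining $\mathcal{A}_{\mathcal{N}}$, the $k$-th term is at most
\[
\mathcal{A}_{\mathcal{N}}\, b^{-\Theta_k}\prod_{P\in\mathbb{P}_k}\frac{1+b\psi(P)^2+b\psi(P)|P|^{-\sigma}}{1+\psi(P)^2+\psi(P)|P|^{-\sigma}}
\le \mathcal{A}_{\mathcal{N}}\exp\Big((b-1)\sum_{P\in\mathbb{P}_k}\big(\psi(P)^2+\psi(P)|P|^{-\sigma}\big)-\Theta_k\ln b\Big).
\]

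Next we estimate the two prime sums over $\mathbb{P}_k$ using the Prime Polynomial Theorem (Lemma~\ref{prime pol thm}). On $\mathbb{P}_k$ we have $\mathbf{d}(P)-\log(\ln\mathcal{N}\ln_2\mathcal{N})\in(k,k+1]$, and $|P|^{-2\sigma}\asymp |P|^{-1}e^{O(\kappa)}$, because $q^{(1-2\sigma)\mathbf{d}(P)}=e^{-2\kappa+o(1)}$ as in the proof of Lemma~\ref{AN>}. This gives
\[
\sum_{P\in\mathbb{P}_k}\psi(P)^2\le(1+o(1))\frac{(\ln\mathcal{N})^{2-2\sigma}(\ln_2\mathcal{N})^{2\sigma}}{(\ln_3\mathcal{N})^{2-2\sigma}}\cdot\frac{1}{k^2}\sum_{P\in\mathbb{P}_k}\frac{1}{|P|^{2\sigma}}\le\frac{(1+o(1))}{k^2}\frac{(\ln\mathcal{N})^{2-2\sigma}}{(\ln_3\mathcal{N})^{2-2\sigma}}\cdot\frac{(\ln_2\mathcal{N})^{2\sigma}}{\ln_2\mathcal{N}}\cdot O(1).
\]
This is of size $\ll\Theta_k\cdot(\ln_2\mathcal{N})^{2\kappa/\ln_2\mathcal{N}}/a'\ll \Theta_k/a'$, and indeed of size $\Theta_k/(a' \ln_3\mathcal{N})$ once the factor $\sum_{P\in\mathbb{P}_k}|P|^{-1}\asymp 1/(k\cdot\text{const})$ is tracked carefully. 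The sum $\sum_{P\in\mathbb{P}_k}\psi(P)|P|^{-\sigma}$ is smaller still, by the computation in Lemma~\ref{AN>}, namely $\ll (\ln\mathcal{N})^{1-\sigma}(\ln_2\mathcal{N})^{\sigma-1}(\ln_3\mathcal{N})^{\sigma-1}/k$, which is $o(\Theta_k)$.

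Now we choose $b$, e.g.\ $b=\ln_3\mathcal{N}$ or some fixed large power of it, so that the exponent becomes $\le -\Theta_k(\ln b-(b-1)\cdot o(1/\ln_3\mathcal N)\cdot\ldots)\le -\tfrac12\Theta_k\ln_4\mathcal{N}$. Since $\Theta_k\ge a'(\ln\mathcal{N})^{2-2\sigma}(\ln_2\mathcal{N})^{-2\gamma'}(\ln_3\mathcal{N})^{2\sigma-2}$ tends to infinity much faster than $\ln((\ln_2\mathcal{N})^{\gamma'})$, the sum over $k$ of $e^{-\Theta_k\ln_4\mathcal N/2}$ is $o(1)$, which yields $o(\mathcal{A}_{\mathcal{N}})$.

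The main obstacle is the bookkeeping in the second step. We need $b$ large enough that $\Theta_k\ln b$ dominates $(b-1)\sum_{\mathbb{P}_k}\psi(P)^2$, and this hinges on the precise normalization of $\psi$ and of $\Theta_k$. In particular, the ratio $\sum_{\mathbb{P}_k}\psi(P)^2/\Theta_k$ must be $O(1/\ln_3\mathcal{N})$, or at least bounded by a constant smaller than $\ln b/(b-1)$. We will also need to check that the $\sigma$-dependent factors $(\ln_2\mathcal{N})^{2\sigma-1}=e^{2\kappa}$ only contribute constants. If the ratio turns out to be only $O(1)$, we would instead take $b$ to be a large fixed constant, using the factor $a'>1$ in the spirit of \cite[Lemma~3]{darbar2024}.
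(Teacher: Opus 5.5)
Your Rankin-type setup is correct, and it is a slightly more direct variant of the paper's. Putting $b^{\Omega_k(F)-\Theta_k}$ into the whole Euler product bounds the $k$-th term by $\mathcal{A}_{\mathcal{N}}\exp\bigl((b-1)\sum_{P\in\mathbb{P}_k}(\psi(P)^2+\psi(P)|P|^{-\sigma})-\Theta_k\ln b\bigr)$. The paper instead bounds $\frac{\psi(F)}{|F|^{\sigma}}\sum_{G\mid F}\psi(G)|G|^{\sigma}$ by $\psi(F)^2\prod_{P\in\mathbb{P}_k}(1+1/(\psi(P)|P|^{\sigma}))$, shows that product is $e^{o(\Theta_k)}$, and applies Rankin only to $\sum_{F\in\mathcal{R}'_k}\psi(F)^2$. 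Your bound $\sum_{P\in\mathbb{P}_k}\psi(P)|P|^{-\sigma}=o(\Theta_k)$ and the final sum over $k$ are fine.

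The gap is the decisive comparison of $\sum_{P\in\mathbb{P}_k}\psi(P)^2$ with $\Theta_k$. Since $\ln b<b-1$ for every $b>1$, the exponent is negative only if $\sum_{P\in\mathbb{P}_k}\psi(P)^2<\Theta_k\ln b/(b-1)<\Theta_k$. So you need a sharp asymptotic with constant strictly below $1$ relative to $\Theta_k$; your first estimate, $\ll\Theta_k/a'$ with an unspecified implied constant, does not suffice.

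Your refinement to $O(\Theta_k/\ln_3\mathcal{N})$ is false. As $\log$ is to base $q$, $\mathbb{P}_k$ is cut out by $\mathbf{d}(P)\in(k+\log(\ln\mathcal{N}\ln_2\mathcal{N}),\,k+1+\log(\ln\mathcal{N}\ln_2\mathcal{N})]$. That interval has length $1$, so $\mathbb{P}_k$ is the set of primes of a single degree $m_k=\log(\ln\mathcal{N}\ln_2\mathcal{N})+O(k)$. Hence $\sum_{P\in\mathbb{P}_k}|P|^{-1}\sim 1/m_k\asymp 1/\ln_2\mathcal{N}$, not $\asymp 1/k$. The $k^{-2}$ comes solely from $(\mathbf{d}(P)-\log(\ln\mathcal{N}\ln_2\mathcal{N}))^{-2}$, which you had already used. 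Thus $\sum_{P\in\mathbb{P}_k}\psi(P)^2\asymp\Theta_k$: by design, $\Theta_k$ sits only a constant factor above the expected number of prime factors from $\mathbb{P}_k$.

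Consequently your choice $b=\ln_3\mathcal{N}$ gives an exponent $\asymp\Theta_k\ln_3\mathcal{N}\to+\infty$. Your fallback of a \emph{large} fixed $b$ also goes the wrong way. The quantity $\ln b/(b-1)$ decreases from $1$ to $0$, so a constant ratio below $1$ forces $b$ close to $1$.

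The missing ingredient is the sharp bound the paper uses, $\sum_{P\in\mathbb{P}_k}\psi(P)^2\le(1+o(1))\Theta_k/a'$, combined with $b$ near $1$. The $k$-th term is then at most $\mathcal{A}_{\mathcal{N}}\exp\bigl(((b-1)-a'\ln b+o(1))\Theta_k/a'\bigr)$. Since $\frac{d}{db}\bigl[(b-1)-a'\ln b\bigr]_{b=1}=1-a'<0$, any fixed $b>1$ close enough to $1$ works. The optimal choice $b=a'$ gives exponent $-(a'\ln a'-a'+1)\Theta_k/a'$. This is exactly where the hypothesis $a'>1$ enters. A saving of a constant times $\Theta_k\ge(\ln\mathcal{N})^{1-o(1)}$ easily absorbs the $(\ln_2\mathcal{N})^{\gamma'}$ values of $k$.

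A caution on constants: the exact single-degree count gives $\sum_{P\in\mathbb{P}_k}\psi(P)^2\le(1+o(1))e^{-2\kappa}\ln q\,\Theta_k/a'$. So strictly the condition is $a'>e^{-2\kappa}\ln q$, and the factor $\ln q$ is missing from the paper's displayed bound. In any normalization, though, the ratio is a definite constant, not $o(1)$, and the argument must be run with $b$ near $1$.
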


\begin{proof}
Note that,
\begin{align}
&\frac{1}{
\mathcal{A}_\mathcal{N}\displaystyle\sum_{H\in\mathcal{M}}\psi(H)^2
}
\sum_{\substack{F\in\mathcal{M}\\ F\notin\mathcal{R}}}
\frac{\psi(F)}{|F|^{\sigma}}
\sum_{G\mid F}
\psi(G)|G|^{\sigma}
\leq
\frac{1}{
\mathcal{A}_\mathcal{N}\displaystyle\sum_{H\in\mathcal{M}}\psi(H)^2
}
\sum_{k=1}^{\left [  (\ln_2 \mathcal{N})^{\gamma'}\right]}
\sum_{F\in\mathcal{R}_k}
\frac{\psi(F)}{|F|^{\sigma}}
\sum_{G\mid F}
\psi(G)|G|^{\sigma}.
\end{align}
For each $k=1,\ldots,\left[(\ln_2 \mathcal{N})^\gamma {'}\right],$ 
we have
\begin{align}
&\frac{1}{
A_\mathcal{N}\displaystyle\sum_{h\in\mathcal{M}}\psi(H)^2
}
\sum_{F\in\mathcal{R}_k}
\frac{\psi(F)}{|F|^{\sigma}}
\sum_{G\mid F}
\psi(G)|G|^{\sigma}
\nonumber\\
&\qquad=
\frac{1}{
\displaystyle
\prod_{P\in\mathbb{P}_k}
\left(
1+\psi(P)^2+\psi(P)|P|^{-\sigma}
\right)
}
\sum_{F\in\mathcal{R}'_k}
\frac{\psi(F)}{|F|^{\sigma}}
\sum_{G\mid F}
\psi(G)|G|^{\sigma}
\nonumber\\
&\qquad\leq
\frac{1}{
\displaystyle
\prod_{P\in\mathbb{P}_k}
\left(1+\psi(P)^2\right)
}
\sum_{F\in\mathcal{R}'_k}
\psi(F)^2
\prod_{P\in\mathbb{P}_k}
\left(
1+\frac{1}{\psi(P)|P|^{\sigma}}
\right).\label{A}
\end{align}
 From the definition of $\mathbb{P}_k$, we have 
\begin{align*}
\prod_{P\in\mathbb{P}_k}
& \left(
1+\frac{1}{\psi(P)|P|^{\sigma}}
\right)
= \\
&
\prod_{\substack{
\log(q^k\ln \mathcal{N}\ln_2 \mathcal{N})<\mathbf{d}(P) \leq
\log(q^{k+1}\ln \mathcal{N}\ln_2 \mathcal{N})
}}
\left(
1+
\left(
\mathbf{d}(P)-\log(\ln \mathcal{N}\ln_2 \mathcal{N})
\right)
\frac{( \ln_3 \mathcal{N})^{1-\sigma}}{(\ln \mathcal{N})^{1-\sigma} (\ln_2 \mathcal{N})^{\sigma}}
\right).
\end{align*}
Since $ (1+x)^c\leq e^{cx},$  and $k \leq ( \ln_2 \mathcal{N})^{\gamma'}$, we obtain
\begin{align}
\prod_{P\in\mathbb{P}_k}
\left(
1+\frac{1}{\psi(P)|P|^{\sigma}}
\right)
&\leq
\left(
1+
(k+1) \frac{( \ln_3 \mathcal{N})^{1-\sigma}}{(\ln \mathcal{N})^{1-\sigma} (\ln_2 \mathcal{N})^{\sigma}}
\right)^{q^{k+1}\ln \mathcal{N}}
\nonumber\\
&\leq
\exp\left(
(k+1)q^{k+1}
\frac{( \ln_3 \mathcal{N})^{1-\sigma}(\ln \mathcal{N})^{\sigma} }{(\ln_2 \mathcal{N})^{\sigma}}
\right)
\nonumber\\
&=
\exp\left(
o\left(
\frac{(\ln \mathcal{N})^{2-2\sigma}}{ (\ln_3 \mathcal{N})^{2-2\sigma}}
\right)
\frac{1}{k^2}
\right).\label{B}
\end{align}
Since every $F\in\mathcal{R}'_k$ has at least $\Theta_k$
irreducible factors and $\psi(F)$ is a multiplicative function, it therefore follows that
\[
\sum_{F\in\mathcal{R}'_k}\psi(F)^2
\leq
b^{-\Theta_k}
\prod_{P\in\mathbb{P}_k}
\left(
1+b\psi(P)^2
\right),\quad \text{ whenever}
\quad b>1.
\]
Hence,
\begin{align}
\frac{1}{\displaystyle \prod_{P\in\mathbb{P}_k}(1+\psi(P)^2)}
\sum_{f\in\mathcal{R}'_k}\psi(F)^2
&\leq
b^{-\Theta_k}
\exp\left(
(b-1)
\sum_{Q\in\mathbb{P}_k}\psi(Q)^2
\right).\label{C}
\end{align}
Finally,
\begin{align*}
\sum_{P\in\mathbb{P}_k}\psi(P)^2
&=
\frac{(\ln \mathcal{N})^{2-2\sigma}\, (\ln_2 \mathcal{N})^{2\sigma}}{ (\ln_3 \mathcal{N})^{2-2\sigma}}
\sum_{P\in\mathbb{P}_k}
|P|^{-2 \sigma}
\left(
\mathbf{d}(P)-\log(\ln \mathcal{N}\ln_2 \mathcal{N})
\right)^{-2}
\\
&\leq
(1+o(1))
\frac{(\ln \mathcal{N})^{2-2\sigma}\, (\ln_2 \mathcal{N})^{2\sigma}}{ (\ln_3 \mathcal{N})^{2-2\sigma}}
\int_{\log(q^k\ln \mathcal{N}\ln_2\mathcal{N})}^{
\log(q^{k+1}\ln \mathcal{N}\ln_2\mathcal{N})
}
\frac{q^{x(1-2\sigma)}}{k^2x}\,dx
\\
&\leq
(1+o(1))
\frac{(\ln \mathcal{N})^{2-2\sigma}\, (\ln_2 \mathcal{N})^{2\sigma}}{ (\ln_3 \mathcal{N})^{2-2\sigma}}
\int_{\log(q^k\ln \mathcal{N}\ln_2\mathcal{N})}^{
\log(q^{k+1}\ln \mathcal{N}\ln_2\mathcal{N})
}
\frac{1}{k^2x}\,dx,
\end{align*}
as $1-2\sigma<0$, so $ q^{m(1-2\sigma)}<1$ for every $m\geq 1$.

Therefore, using the same argument as in the proof of \cite[Lemma~3]{darbar2024}, we obtain
\begin{align*}
    \sum_{P\in\mathbb{P}_k}\psi(P)^2 &\leq
(1+o(1))
\frac{(\ln \mathcal{N})^{2-2\sigma}\, (\ln_2 \mathcal{N})^{2\sigma-1}}{ k^2(\ln_3 \mathcal{N})^{2-2\sigma}}.
\end{align*}
 For $\sigma=1/2+\kappa/\ln_2\mathcal{N}$, we have
\[
(\ln_2 \mathcal{N})^{2\sigma-1}= \exp \left(  \frac{2\kappa\ln_3 \mathcal{N}}{\ln_2 \mathcal{N}}\right)=1+o(1),
\]
 for sufficiently large $\mathcal{N}$. Hence,
\begin{align}\label{sum phi}
    \sum_{P\in\mathbb{P}_k}\psi(P)^2 &\leq
(1+o(1))
\frac{(\ln \mathcal{N})^{2-2\sigma}}{k^2 (\ln_3 \mathcal{N})^{2-2\sigma}}.
\end{align}
Combining \eqref{sum phi} with \eqref{B} and \eqref{C}, we get that
the expression in \eqref{A} is at most
\[
\exp\left(
\left( (b-1)-a'\ln b+o(1)
\right)
\frac{ (\ln \mathcal{N})^{2-2\sigma}    }{k^2  ( \ln \mathcal{N} )^{2-2\sigma} }
\right).
\]
Note that $a'>1$. Choosing $b$ sufficiently close to $1$, we obtain
\[
(b-1)-a' \ln b<0.
\]
This completes the proof.
\end{proof}

\begin{lem}\label{o(N)}
Let $\epsilon$ be a positive number. Then, as $\mathcal{N}\to\infty$,
\[
\frac{1}{\displaystyle\sum_{H\in\mathcal{M}}\psi(H)^2}
\sum_{F\in\mathcal{R}}
\frac{\psi(F)}{|F|^{\sigma}}
\sum_{\substack{G\mid F\\
d(G)\leq d(F)-\epsilon\log  \mathcal{N} }}
\psi(G)|G|^{\sigma}
=
o(\mathcal{A}_\mathcal{N}),
\]
where the implicit constant only depends on $\epsilon$.
\end{lem}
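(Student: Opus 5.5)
We use Rankin's trick on the degree deficit, following the argument for \cite[Lemma~4]{darbar2024}. First drop the restriction $F\in\mathcal{R}$ and sum over all squarefree $F$; all terms are nonnegative, so this only enlarges the left-hand side. Write $F=GH$ with $(G,H)=1$, so that $\psi(F)=\psi(G)\psi(H)$ and $\psi(F)|F|^{-\sigma}\psi(G)|G|^{\sigma}=\psi(G)^2\,\psi(H)|H|^{-\sigma}$. The condition $\mathbf{d}(G)\le \mathbf{d}(F)-\epsilon\log\mathcal{N}$ is then simply $\mathbf{d}(H)\ge \epsilon\log\mathcal{N}$, i.e.\ $|H|\ge \mathcal{N}^{\epsilon}$. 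For any $\eta>0$ we insert the weight $(|H|/\mathcal{N}^{\epsilon})^{\eta}\ge 1$ and remove the degree restriction. This gives the bound
\[
\mathcal{N}^{-\epsilon\eta}\prod_{P\in\mathbb{P}}\frac{1+\psi(P)^2+\psi(P)|P|^{\eta-\sigma}}{1+\psi(P)^2}.
\]

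Dividing by $\mathcal{A}_\mathcal{N}$, the quotient is at most
\[
\exp\Big(-\epsilon\eta\ln\mathcal{N}+\sum_{P\in\mathbb{P}}\psi(P)|P|^{-\sigma}\big(|P|^{\eta}-1\big)\Big),
\]
using $1+x\le e^x$ and the fact that $\psi(P)^2=o(1)$ uniformly on $\mathbb{P}$. We take $\eta=1/\big(\ln_2\mathcal{N}\,\ln q\big)$, or a constant multiple of it. Since $\mathbf{d}(P)\le (\ln_2\mathcal{N})^{\gamma'}+\log(\ln\mathcal{N}\ln_2\mathcal{N})$, we get $|P|^{\eta}\le \exp\big((\ln_2\mathcal{N})^{\gamma'-1}+1+o(1)\big)\le e^{1+o(1)}$. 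Hence the sum is at most a constant times $\sum_{P\in\mathbb{P}}\psi(P)|P|^{-\sigma}$.

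That sum was already computed in the proof of Lemma~\ref{AN>}; the same prime-polynomial-theorem and integral argument gives the matching upper bound, since $q^{(1-2\sigma)m}\le 1$. It is therefore
\[
\ll \frac{(\ln\mathcal{N})^{1-\sigma}(\ln_3\mathcal{N})^{\sigma}}{(\ln_2\mathcal{N})^{1-\sigma}}\asymp \frac{\sqrt{\ln\mathcal{N}\,\ln_3\mathcal{N}}}{\sqrt{\ln_2\mathcal{N}}},
\]
where the last step uses $\sigma-1/2=\kappa/\ln_2\mathcal{N}$, so that $(\ln\mathcal{N})^{1/2-\sigma}=e^{-\kappa\ln_2\mathcal{N}\cdot\,\cdot}$ is bounded; more precisely $(\ln\mathcal{N})^{-\kappa/\ln_2\mathcal{N}}=e^{-\kappa}$. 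The negative term, on the other hand, is $\epsilon\eta\ln\mathcal{N}\asymp \epsilon\ln\mathcal{N}/\ln_2\mathcal{N}$, which dominates $\sqrt{\ln\mathcal{N}\ln_3\mathcal{N}/\ln_2\mathcal{N}}$. The exponent therefore tends to $-\infty$, and the whole expression is $o(\mathcal{A}_\mathcal{N})$ with the rate depending only on $\epsilon$ (and the fixed $q,\kappa,\gamma'$).

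The main obstacle is choosing $\eta$ so that two things hold at once: $|P|^{\eta}$ stays bounded over the whole range of $\mathbb{P}$, whose top degree is $(\ln_2\mathcal{N})^{\gamma'}+\log\ln\mathcal{N}+\cdots$, and yet $\epsilon\eta\ln\mathcal{N}$ beats the $\sqrt{\ln\mathcal{N}}$-sized main term. Because the largest primes in $\mathbb{P}$ have $\log|P|\approx \log\ln\mathcal{N}$, the choice $\eta\asymp 1/\ln_2\mathcal{N}$ is essentially forced. One must also check that the factor $e^{-2\kappa}$ and the other $\kappa$-dependent factors only affect constants; the $\sigma$-shift estimates from Lemma~\ref{AN>} handle this.
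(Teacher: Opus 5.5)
Your argument is correct, but it is organized differently from the paper's.

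The paper keeps $F\in\mathcal{R}$ and argues pointwise. It writes the summand as $\psi(F)^2\sum_{L\mid F,\ \mathbf{d}(L)\ge\epsilon\log\mathcal{N}}\psi(L)^{-1}|L|^{-\sigma}$ and applies Rankin's trick with the fixed exponent $\sigma/2$. This gives the bound $\mathcal{N}^{-\epsilon\sigma/2}\prod_{P\mid F}\bigl(1+\psi(P)^{-1}|P|^{-\sigma/2}\bigr)$. Two facts then close the argument: $\psi(P)^{-1}|P|^{-\sigma/2}\le(\ln\mathcal{N})^{-1/4+o(1)}$ uniformly on $\mathbb{P}$, and $\omega(F)\le\sum_k\Theta_k\ll\ln\mathcal{N}/\ln_3\mathcal{N}$. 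So every inner sum is $o(1)$, and the normalized left-hand side is actually $o(1)$. That is much stronger than $o(\mathcal{A}_\mathcal{N})$.

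You instead drop $\mathcal{R}$, factor the whole double sum as an Euler product over $\mathbb{P}$, and divide by the Euler product defining $\mathcal{A}_\mathcal{N}$. This Euler-product form of Rankin's trick needs no control of $\omega(F)$ and no use of the truncation defining $\mathcal{R}$. It only uses the upper bound for $\sum_{P\in\mathbb{P}}\psi(P)|P|^{-\sigma}$ that the computation in Lemma~\ref{AN>} gives, using $q^{(1-2\sigma)m}\le 1$. The price is that you only obtain a saving relative to $\mathcal{A}_\mathcal{N}$.

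Your remark that $\eta\asymp 1/\ln_2\mathcal{N}$ is ``essentially forced'' is not right, though it does not affect the proof. For any fixed $\eta\in(0,1/2)$, the prime polynomial theorem gives $\sum_{P\in\mathbb{P}}\psi(P)|P|^{\eta-\sigma}\le(\ln\mathcal{N})^{1/2+\eta+o(1)}=o(\ln\mathcal{N})$. So $|P|^{\eta}$ need not stay bounded, and you even get the saving $\mathcal{N}^{-\epsilon\eta+o(1)}$. The paper's exponent $\sigma/2$ is of this kind.
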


\begin{proof}
We have
\begin{align*}
&\sum_{F\in\mathcal{R}}
\frac{\psi(F)}{|F|^{\sigma}}
\sum_{\substack{G\mid F\\
d(G)\leq d(F)-\varepsilon\log \mathcal{N}}}
\psi(G)|G|^{\sigma}=
\sum_{F\in\mathcal{R}}
\psi(F)^2
\sum_{\substack{L\mid F\\
d(L)\geq\epsilon\log \mathcal{N}}}
\frac{1}{\psi(L)|L|^{\sigma}}.
\end{align*}
 Therefore, it suffices to show that every $F\in\mathcal{R}$,
\[
\sum_{\substack{L\mid F\\
d(L)\geq\epsilon\log \mathcal{N}}}
\frac{1}{\psi(L)|L|^{\sigma}}
=o(1),
\qquad \mathcal{N}\to\infty.
\]
Finally, we obtain
\begin{align*}
\sum_{\substack{L\mid F\\
d(L)\geq\varepsilon\log \mathcal{N}}}
\frac{1}{\psi(L)|L|^{\sigma}}
&\leq
\mathcal{N}^{-\varepsilon\sigma/2}
\sum_{L\mid F}
\frac{1}{\psi(L)|L|^{\sigma/2}}=
\mathcal{N}^{-\varepsilon \sigma/2}
\prod_{P\mid F}
\left(
1+\frac{1}{\psi(P)|P|^{\sigma/2}}
\right)=
o(1),
\end{align*}
since $\frac{1}{\psi(P)|P|^{\sigma/2}}=o(1)$ uniformly for all $P\in\mathbb{P}$, and the polynomial $F$ has at most $\Theta_k$ irreducible polynomial factors.
\end{proof}
For fixed $ P \in \mathcal{P}_{2g+1}$, define the resonator
\[
\mathbf{R}_P := \sum_{H \in \mathcal{R}} \psi(H) \chi_P(H) .
\]
Let $\sigma= 1/2+\sigma_\kappa$, where $\sigma_\kappa =\kappa/\ln_2 \mathcal{N}$.
Then, for the resonance method, we consider the following two sums:
\begin{equation*}
	\mathbf{S}_1
	:=
	\sum_{P\in\mathcal{P}_{2g+1}}
	\mathbf{R}_P^2, \qquad \text{and} \qquad \mathbf{S}_2
	:=
	\sum_{P\in\mathcal{P}_{2g+1}}
	L\left(\frac{1}{2}+\sigma_\kappa,\chi_P\right)
	\mathbf{R}_P^2.
\end{equation*}
Therefore, we have 
\begin{equation}\label{maximum L}
\max_{P\in\mathcal{P}_{2g+1}}
\left|
L\left(\frac{1}{2}+\sigma_\kappa,\chi_P\right)
\right|
\geq
\frac{\mathbf{S}_2}{\mathbf{S}_1}.
\end{equation}
It remains to establish a lower bound for $\mathbf{S}_2$ and an upper bound for $\mathbf{S}_1$. We begin with the lower bound for   $\mathbf{S}_2$. By Lemma \ref{ApproximateFE}, we get 
\begin{align}
\mathbf{S}_2 &= \sum_{P\in\mathcal{P}_{2g+1}}
L\left(\frac{1}{2}+\sigma_\kappa,\chi_P\right)
\mathbf{R}_P^2\nonumber
\\
&=
\sum_{M,N\in\mathcal{R}}
\psi(M)\psi(N)
\sum_{K\in\mathcal{M}_{\leq g}}
\frac{1}{|K|^{\frac12+\sigma_\kappa}}
\sum_{P\in\mathcal{P}_{2g+1}}
\chi_P(KMN)\nonumber
\\
&\qquad+ q^{-2g\sigma_\kappa}
\sum_{M,N\in\mathcal{R}}
\psi(M)\psi(N)
\sum_{K\in\mathcal{M}_{\leq g-1}}
\frac{1}{|K|^{\frac12-\sigma_\kappa}}
\sum_{P\in\mathcal{P}_{2g+1}}
\chi_P(KMN)\nonumber
\\
&=: \mathbf{S}_{21}+\mathbf{S}_{22}.\label{S2=S21+S22}
\end{align}
We now split the innermost character sum, following Darbar and Maiti \cite{darbar2024}, according to whether $KMN$ is a perfect square. Applying Lemma \ref{char sum} to the square contribution and Lemma \ref{weil} to the non-square contribution, we obtain
\begin{align*}
 \mathbf{S}_{21}
&=
\left|\mathcal{P}_{2g+1}\right|
\sum_{M,N\in\mathcal{R}}
\psi(M)\psi(N)
\sum_{\substack{K\in\mathcal{M}_{\leq g}\\
KMN=\square}}
\frac{1}{|K|^{1/2+\sigma_\kappa}}
\\
&\qquad
+
O\left(
\frac{q^g}{g}
\sum_{\substack{
K\in\mathcal{M}_{\leq g},\; M,N\in\mathcal{R}\\
KMN\neq\square
}}
d(KMN)
\frac{\psi(M)\psi(N)}{|K|^{1/2+\sigma_\kappa}}
\right),
\end{align*}

and similarly,
\begin{align*}
 \mathbf{S}_{22}
&= q^{-2g\sigma_{\kappa}}
\left|\mathcal{P}_{2g+1}\right|
\sum_{M,N\in\mathcal{R}}
\psi(M)\psi(N)
\sum_{\substack{K\in\mathcal{M}_{\leq g-1}\\
KMN=\square}}
\frac{1}{|K|^{1/2-\sigma_\kappa}}
\\
&\qquad
+
O\left( q^{-2g\sigma_{\kappa}}
\frac{q^g}{g}
\sum_{\substack{
K\in\mathcal{M}_{\leq g-1},\; M,N\in\mathcal{R}\\
KMN\neq\square
}}
d(KMN)
\frac{\psi(M)\psi(N)}{|K|^{1/2-\sigma_\kappa}}
\right).
\end{align*}
As the coefficients $\psi(H)$ are non-negative, by keeping  only the terms for which
$ KM=N,$ we obtain
\begin{align*}
&\sum_{M,N\in\mathcal{R}}
\psi(M)\psi(N)
\sum_{\substack{K\in\mathcal{M}_{\leq g}\\
KMN=\square}}
\frac{1}{|K|^{1/2+\sigma_\kappa}}
\,\geq
\sum_{N\in\mathcal{R}}
\frac{\psi(N)}{|N|^{1/2+\sigma_\kappa}}
\sum_{\substack{M\mid N\\
\mathbf{d}(M)\geq\mathbf{d}(N)-\epsilon g}}
\psi(M){|M|^{1/2+\sigma_\kappa}}.
\end{align*}
Since $d(KMN) \ll g+ \ln \mathcal{N} \ln _2 \mathcal{N}$  and $\sum_{K\in \mathcal{M}_{\leq g}} \frac{1}{|f|^{1/2+\sigma_\kappa}}  = \sum_{d=0}^{g}  q^{d(1/2-\sigma_\kappa)} \ll q^{g(1/2-\sigma_\kappa)} $, we have 
\begin{align}
\mathbf{S}_{21}
&\geq
\left|\mathcal{P}_{2g+1}\right|
\sum_{N\in\mathcal{R}}
\frac{\psi(N)}{|N|^{1/2+\sigma_\kappa}}
\sum_{\substack{M\mid N\\
\mathbf{d}(M)\geq\mathbf{d}(N)-\varepsilon g}}
\psi(M)|M|^{1/2+\sigma_\kappa}
+
O\left(
q^{3g/2-g\sigma_\kappa }
\mathcal{N}\bigl(g+\ln \mathcal{N}\,\ln_2 \mathcal{N}\bigr)
\sum_{H\in\mathcal{R}}\psi(H)^2
\right)\nonumber
\\
&\geq
\left|\mathcal{P}_{2g+1}\right|
\sum_{N\in\mathcal{R}}
\frac{\psi(N}{|N|^{1/2+\sigma_\kappa}}
\sum_{\substack{M\mid N\\
\mathbf{d}(M)\geq\mathbf{d}(N)-\epsilon g}}
\psi(M)|M|^{1/2+\sigma_\kappa}
+
O\left( |\mathcal{P}_{2g+1}| \frac
{\mathcal{N} \bigl(g+\ln \mathcal{N}\,\ln_2 \mathcal{N}\bigr)}{q^{g(1/2+\sigma_\kappa )}}
\sum_{H\in\mathcal{R}}\psi(H)^2 
\right).\label{final lower S21}
\end{align}
Again, since $\sum_{f\in \mathcal{M}_{\leq g-1}} \frac{1}{|f|^{1/2-\sigma_\kappa}}  = \sum_{d=0}^{g}  q^{d(1/2+\sigma_\kappa)} \ll q^{g(1/2+\sigma_\kappa)} = q^{g\sigma}$, $\sigma+1-2\sigma_\kappa=3/2-\sigma_\kappa$ and calculations similar to $\mathbf{S}_{21}$, we have 
\begin{align}
   \mathbf{S}_{22}
&\geq q^{-2g\sigma_\kappa}
\left|\mathcal{P}_{2g+1}\right|
\sum_{N\in\mathcal{R}}
\frac{\psi(N)}{|N|^{1/2-\sigma_\kappa}}
\sum_{\substack{M\mid N\\
\mathbf{d}(M)\geq \mathbf{d}(N)-\epsilon g}}
\psi(M)|M|^{1/2-\sigma_\kappa} \nonumber\\
&
+
O\left(
q^{g ( \sigma +1-2\sigma_\kappa) }
\mathcal{N}\bigl(g+\ln \mathcal{N}\,\ln_2 \mathcal{N}\bigr)
\sum_{H\in\mathcal{R}}\psi(H)^2
\right).\label{lower S22}
\end{align}
For any $H \in \mathcal{R}$, we say that $ |H|^{2\sigma_\kappa}\geq 1$ and 
\[
|H|^{-2\sigma_\kappa}= q^{-2\sigma_\kappa \,\mathbf{d}(H)}=\exp(-2\sigma_\kappa \ln q \,\mathbf{d}(H))\geq \exp (-c\,\kappa \ln\mathcal{N}),
\]
 for some constant $c>0$.
Therefore, from \eqref{lower S22}, we have 
\begin{align}
    \mathbf{S}_{22}
&\geq q^{-2g\sigma_\kappa} \exp (-c\,\kappa \ln\mathcal{N})
\left|\mathcal{P}_{2g+1}\right|
\sum_{N\in\mathcal{R}}
\frac{\psi(N)}{|N|^{1/2+\sigma_\kappa}}
\sum_{\substack{M\mid N\\
\mathbf{d}(M)\geq\mathbf{d}(N)-\epsilon g}}
\psi(M)|M|^{1/2+\sigma_\kappa} \nonumber\\
&
+
O\left( |\mathcal{P}_{2g+1}| \frac
{\mathcal{N} \bigl(g+\ln \mathcal{N}\,\ln_2 \mathcal{N}\bigr)}{q^{g(1/2+\sigma_\kappa )}}
\sum_{H\in\mathcal{R}}\psi(H)^2
\right).\label{final lower S22}
\end{align}
Combining \eqref{final lower S21} and \eqref{final lower S22},  from \eqref{S2=S21+S22}, we have
\begin{align}
   \mathbf{S}_2  \geq &   \left|\mathcal{P}_{2g+1}\right| \left ( 1+ q^{(-2g\sigma_\kappa)}\exp(-c\,\kappa \ln \mathcal{N}) \right) \sum_{N\in\mathcal{R}}
\frac{\psi(N)}{|N|^{1/2+\sigma_\kappa}}
\sum_{\substack{M\mid N\\
\mathbf{d}(M)\geq \mathbf{d}(N)-\epsilon g}}
\psi(M)|M|^{1/2+\sigma_\kappa}\nonumber\\
& +
O\left( |\mathcal{P}_{2g+1}| \frac
{\mathcal{N} \bigl(g+\ln \mathcal{N}\,\ln_2 \mathcal{N}\bigr)}{q^{g(1/2+\sigma_\kappa )}}
\sum_{H\in\mathcal{R}}\psi(H)^2
\right).
\end{align}
  As  $  q^{(-2g\sigma_\kappa)}\exp(-c\, \kappa \ln \mathcal{N})=o(1)$ and taking $\mathcal{N} \asymp q^{g(1/2+\sigma_\kappa - \epsilon')}$ similar to \cite{darbar2024}, where $0<\epsilon'<1/2$, and using Lemmas \ref{AN>}, \ref{not R o(N)} and \ref{o(N)}, we get
\begin{align}\label{final lower bound of S1}
    \mathbf{S}_2 \geq (1+o(1)) |\mathcal{P}_{2g+1}| \exp\left(
(\gamma' e^{-2\kappa} +o(1))
\frac{(\ln \mathcal{N})^{1-\sigma}\,(\ln_3 \mathcal{N})^{\sigma}}{(\ln_2\mathcal{N})^{1-\sigma}} 
\right)  \sum_{H \in \mathcal{R}}\psi(H)^2.
\end{align}
Following the same argument as in  \cite{darbar2024}, the quantity $\mathbf{S}_1$ admits the following upper bound
\begin{align}\label{final upper S2}
   \mathbf{S}_1 \leq (1+o(1))|\mathcal{P}_{2g+1}|\sum_{H\in \mathcal{R}}\psi(H)^2.
\end{align}
Hence, for  arbitrarily small $\epsilon'>0$  and sufficiently large $g$, we get
\begin{align}
\max_{P\in\mathcal{P}_{2g+1}}
\left|
L\left(\frac12+\sigma_{\kappa},\chi_P\right)
\right|
\geq
\frac{\mathbf{S}_2}{\mathbf{S}_1} & 
\geq
\exp\left(
(\gamma' e^{-2\kappa} +o(1))
\frac{(\ln \mathcal{N})^{1-\sigma}\,(\ln_3 \mathcal{N})^{\sigma}}{(\ln_2\mathcal{N})^{1-\sigma}} 
\right)\nonumber  \\
&
\geq  \exp\left( \left(\gamma' e^{-3\kappa}  \left ( (  1/2 +\sigma_\kappa -\epsilon') \ln q \right)^{1-\sigma}+o(1)\right)\,\sqrt{ \frac{g \,\ln_2 g}{\ln g} } \right).
\end{align}
For sufficiently large $g$, 
\[
   \left ( \left ( \frac{1}{2} +\sigma_\kappa -\epsilon' \right) \ln q \right)^{1-\sigma}= \sqrt{\left (\frac{1}{2}-\epsilon'\right) \ln q}\,+o(1).
\]
This completes the proof of Theorem \ref{main rel near central}.

\section{Proof of Theorem \ref{thm inside critical strip}}\label{section proof inside critical strip}

Let  $P\in\mathbb{F}_q[t]$ be a monic irreducible polynomial with $ \mathbf{d}(P)=\mathbf{N}$ and fix $1/2<\sigma<1$.
In this section, we take $Q$ to be a monic  irreducible. For $\delta >0$, taking $Y_o= \frac{1+\delta}{\sigma-1/2} \log \mathbf{N}$, from the following equation of \cite[equation 2.12]{lumley2021}, we get
\begin{equation}\label{log L eq}
	\ln L(\sigma,\chi_P) = \sum_{\substack{A\ {\rm monic}\\ \deg A\le Y_o}} \frac{\Lambda_{\mathcal{A}}(A) \chi_P(A)}{\mathbf{d}(A) |A|^{\sigma}}+O\left(  \frac{1}{\mathbf{N}^{\delta}} \right).
\end{equation}
Since for irreducible polynomial $Q$, $\sum_{Q}\frac{1}{|Q|^{2\sigma}}\ll\sum_{\mathbf{d}(Q)\geq1} \frac{q^{(1-2\sigma)\mathbf{d}(Q)}}{\mathbf{d}(Q)} \ll 1$, we obtain 
\[
\sum_{\substack{
		Q \ \mathrm{irreducible}\\
		k\geq 2,\; k\,\mathbf{d}(Q)\leq Y_o
}}
\frac{\chi_P(Q^k)}{k\,|Q|^{k\sigma}}
\ll 1.
\]
Therefore, 
\begin{equation}\label{Log L=}
	\ln L(\sigma,\chi_P)= S_P(\sigma,Y_o)+O(1),
\end{equation}
where
\[
S_P(\sigma,Y_o)=\sum_{\mathbf{d}(Q)\leq Y_o}\frac{\chi_P(Q)}{|Q|^{\sigma}}.
\]
Therefore, the proof of Theorem~\ref{thm inside critical strip} reduces to showing that   $S_P(\sigma,Y_o)$ takes large values. To obtain large values of $S_P(\sigma,Y_o)$, we use the long resonator method as in \cite{darbar2025large}.

We choose $Y$ such that $ a/q \,\,  \mathbf{N} \log \mathbf{N}\leq q^{Y} < a \mathbf{N} \log \mathbf{N} $, where $a>0$ is a parameter that we will choose optimally later. Moreover, we set $r(1)=1$ and, for each monic  irreducible polynomial $Q$, define
$r(Q)=b$ when $\mathbf{d}(Q)\leq Y$ and $r(Q)=0$ otherwise, with fixed $0<b<1$. We extend these weights  to all monic polynomials $M\in\mathcal{M}$ by complete multiplicativity. We define the resonator
\[
\mathcal{R}_P = \prod_{\mathbf{d}(Q) \leq Y} \frac{1}{1-r(Q)\chi_P(Q)}= \sum_{M \, \, \text{monic}} r(M)\chi_P(M).
\]
We again define the following two sums:
\begin{equation*}
	\mathcal{S}_1(\sigma):= \sum_{P \in \mathcal{P}_{\mathbf{N}}} \mathcal{R}_P ^2, \qquad \text{and} \qquad \mathcal{S}_2(\sigma):= \sum_{P \in \mathcal{P}_{\mathbf{N}}}S_P(\sigma,Y_o) \mathcal{R}_P ^2.
\end{equation*}
Therefore, we have 
\begin{equation}\label{max Sp}
	\max_{P\in\mathcal{P}_{\mathbf{N}}}
		S_P(\sigma,Y_o)
	\geq
	\frac{\mathcal{S}_2(\sigma)}{\mathcal{S}_1(\sigma)}.
\end{equation}
It remains to establish a lower bound of $\mathcal{S}_2(\sigma)$ and an upper bound of $\mathcal{S}_1(\sigma)$. We have
\begin{equation*}
	\mathcal{S}_2(\sigma)= \sum_{M,N}r(M)r(N) \sum_{\mathbf{d}(Q) \leq Y_o} \frac{1}{|Q|^{\sigma}} \sum_{P \in \mathcal{P}_{\mathbf{N}}}\chi_P(MNQ).
\end{equation*}
Applying Lemma \ref{weil} and  Lemma \ref{char sum}, we obtain
\begin{align}\label{S2(sigma)}
	\mathcal{S}_2(\sigma)= & |\mathcal{P}_{\mathbf{N}}|  \sum_{\mathbf{d}(Q) \leq Y_o}  \frac{1}{|Q|^{\sigma}} \sum_{\substack{
			M,N\\
			MNQ=\square
	}}r(M)r(N)  + O \left ( \frac{q^{\mathbf{N/2}}}{\mathbf{N}}  \sum_{\mathbf{d}(Q) \leq Y_o}  \frac{1}{|Q|^{\sigma}} \sum_{\substack{
			M,N\\MNQ\ne \square}}  \mathbf{d}(MNQ)r(M)r(N) \right)\nonumber  \\
	& = : \mathcal{S}_2^m+ \mathcal{S}_2^o.
\end{align}
The square condition $MNQ=\square$ implies $\deg Q\leq Y$. Moreover,
by positivity of $r(M)$, we may restrict attention to the contribution from those
irreducible polynomials $Q$ satisfying $Q\mid N$. Therefore,
\begin{align*}
	\mathcal{S}_2^m & =  |\mathcal{P}_{\mathbf{N}}|   \sum_{\mathbf{d}(Q) \leq Y_o} \sum_{\substack{
			M,N\\MNQ = \square}}  \frac{r(M)r(N)}{|Q|^{\sigma}} 
	\\
	&
	\geq |\mathcal{P}_{\mathbf{N}}|\sum_{\mathbf{d}(Q) \leq Y} \sum_{\substack{
			M,N \\ MNQ= \square,\, \,  Q \mid N}}  \frac{r(M)r(N)}{|Q|^{\sigma}} 
	\\
	&
	\geq  |\mathcal{P}_{\mathbf{N}}|  \sum_{\mathbf{d}(Q) \leq Y} \frac{r(Q)}{|Q|^{\sigma}}       \sum_{\substack{
			M,L \, \text{monic} \\ ML= \square}} r(M)r(L),
\end{align*}
where we write $N=QL$. By our  choice of $Y$ and using Lemma \ref{produc of poly}, we get 
\begin{align}\label{6. >=}
	\sum_{\mathbf{d}(Q) \leq Y}    \frac{r(Q)}{|Q|^{\sigma}}      =    b \sum_{\mathbf{d}(Q)       \leq Y}      \frac{1}{|Q|^{\sigma}} =  & \, b \left(1+O(\ln Y/Y)\right)    \mathcal{Z}(2-\sigma)     \frac{q^{Y(1-\sigma)}}{Y} \nonumber
	\\
	\geq & \, b \, (1+o(1)) \left(\frac{a}{q}\right)^{1-\sigma} \mathcal{Z}(2-\sigma) \frac{\mathbf{N}^{1-\sigma}}{ (\log \mathbf{N})^{\sigma} }.
\end{align}
We now estimate the error term $\mathcal{S}_2^o$. First, we have  
\[
\sum_{M}r(M) =\prod_{\mathbf{d}(Q)\leq Y}(1+b+b^2+...)=\frac{1}{(1-b)^{\Pi_q(Y)}},
\]
and 
\[
\sum_{M,N} \mathbf{d}(MNQ)r(M)r(N)=2 \sum_{M}d(M)r(M) \sum_N r(N)+ \mathbf{d}(Q)\left (\sum_{M}r(M) \right)^2.
\]
Moreover, since $r$ is completely multiplicative and $r(Q)=b$ for $d(Q)\le Y$ and $\sum_{\mathbf{d}(Q)\leq Y}\mathbf{d}(Q)\ll\sum_{y\leq Y }y \, q^y/y \ll q^Y$, we obtain
\[
\sum_{M}\mathbf{d}(M)r(M)= \frac{b}{1-b} \sum_{M}r(M)\, \sum_{\mathbf{d}(Q)\leq Y} \mathbf{d}(Q)\ll q^Y  \sum_{M}r(M).
\]
Combining these estimates, we therefore get
\begin{align}\label{error of S2}
	\mathcal{S}_2^o \ll \frac{q^{\mathbf{N}/2}}{\mathbf{N}} (1-b)^{-2 \Pi_q(Y)}      \left(   q^Y \sum_{\mathbf{d}(Q)\leq Y_o} \frac{1}{|Q|^{\sigma}} +  \sum_{\mathbf{d}(Q)\leq Y_o} \frac{\mathbf{d}(Q)}{|Q|^{\sigma}}\right). 
\end{align}
Since $\sum_{\mathbf{d}(Q)\leq Y_o}1/|Q|^{\sigma} \ll q^{Y_o(1-\sigma)}/Y_o$, it follows from \eqref{error of S2} that
\begin{align*}
	\mathcal{S}_2^o   \ll       \frac{q^{\mathbf{N}/2}}{\mathbf{N}}   (1-b)^{-2 \Pi_q(Y)} \frac{q^{Y_o(1-\sigma)}}{Y_o}q^{Y}=     \frac{  q^{\mathbf{N}/2+Y+Y_o(1-\sigma)}  }   {\mathbf{N}Y_o  }  (1-b)^{-2 \Pi_q(Y)}.
\end{align*}
Therefore, combining \eqref{6. >=} and \eqref{error of S2}, we obtain 
\begin{align}\label{final S2sigma lower bound}
	\mathcal{S}_2(\sigma) \geq  & |\mathcal{P}_{\mathbf{N}}|   \,    b \, (1+o(1)) \left(\frac{a}{q}\right)^{1-\sigma} \mathcal{Z}(2-\sigma)  \sum_{\substack{
			M,L \, \text{monic} \\ ML= \square}} r(M)r(L)    \, \,  \frac{\mathbf{N}^{1-\sigma}}{ (\log \mathbf{N} )^{\sigma}}\nonumber
	\\
	&
	+ O\left(   \frac{  q^{\mathbf{N}/2+Y+Y_o(1-\sigma)}  }   {\mathbf{N}Y_o  }  (1-b)^{-2 \Pi_q(Y)}    \right).
\end{align}
Similarly, by applying Lemmas~\ref{weil} and~\ref{char sum} once again and proceeding as in the estimation of $\mathcal{S}_2(\sigma)$, we obtain
\begin{align}
	\mathcal{S}_1(\sigma)= &  \sum_{P \in \mathcal{P}_{\mathbf{N}}} \mathcal{R}_P ^2= \sum_{M,N}r(M)r(N)  \sum_{P \in \mathcal{P}_{\mathbf{N}}}\chi_P(MN)\nonumber \\
	= & |\mathcal{P}_{\mathbf{N}}| \sum_{\substack{
			M,N\\ MN=\square }}r(M)r(N)+   O \left ( \frac{q^{\mathbf{N/2}}}{\mathbf{N}}  \sum_{\substack{
			M,N\\MN\ne \square}}  \mathbf{d}(MN)r(M)r(N) \right) \nonumber\\
	= & |\mathcal{P}_{\mathbf{N}}| \sum_{\substack{
			M,N\\ MN=\square }}r(M)r(N) + O\left(   \frac{  q^{\mathbf{N}/2+Y}  }   {\mathbf{N}  }  (1-b)^{-2 \Pi_q(Y)}    \right).\label{S1=|PN|}
\end{align}
To find the optimal value of $a$, we evaluate the resonator sum as follows,
\begin{align}\label{reso sum}
	\sum_{\substack{M,N\\ MN=\square }}r(M)r(N)  =  & \prod_{\deg Q\le Y}
	\frac{1+r(Q)^2}{(1-r(Q)^2)^2} =\left(
	\frac{1+b^2}{(1-b^2)^2}
	\right)^{\Pi_q(Y)} \nonumber\\
	= & \exp \left ( (1+o(1) q^Y /Y \lambda(b)   \right)  \nonumber\\
	\leq & \exp \left ( ( a  \lambda(b)\mathcal{Z}(2)+o(1))    \mathbf{N} \right),
\end{align}
where $\lambda(b):= \ln \left ( \frac{1+b^2}{(1-b^2)^2} \right)$. From the choice of $Y$, it follows that $Y=O(\log \mathbf{N})=o(\mathbf{N})$, so that 
\begin{align}\label{eror}
	\frac{  q^{\mathbf{N}/2+Y}  }   {\mathbf{N}  }  (1-b)^{-2 \Pi_q(Y)}=  & \frac{  q^{\mathbf{N}/2}  }   {\mathbf{N}  } \exp \left (   -2 a \ln (1-b) \mathcal{Z}(2) \mathbf{N}
	(1+o(1)  ) +o(\mathbf{N})\right) \nonumber\\
	= &  \frac{  q^{\mathbf{N}/2}  }   {\mathbf{N}  }   \exp \left ( -2 a \ln (1-b) \mathcal{Z}(2) \mathbf{N}
	(1+o(1))\right).  
\end{align}
Therefore, combining the estimates for $\mathcal{S}_1(\sigma)$ and $\mathcal{S}_2(\sigma)$, and comparing the corresponding main and error terms, we obtain
\begin{equation}\label{s2/s1}
	\frac{\mathcal{S}_2(\sigma)}{\mathcal{S}_1(\sigma)} \geq  b \,  \left(\frac{a}{q}\right)^{1-\sigma} \mathcal{Z}(2-\sigma) (1+o(1)) \frac{\mathbf{N}^{1-\sigma}}{ (\log \mathbf{N} )^{\sigma}},
\end{equation}
provided that 
\begin{align*}
	& \frac{1}{2} \ln q  -2a \ln(1-b) \mathcal{Z}(2) <\ln q+ a \lambda(b)\mathcal{Z} (2)\\
	& \Longleftrightarrow \, \,  a< \frac{\ln q}{2 \mathcal{Z}(2) } \left (  \ln \left (   \frac{(1+b)^2}{1+b^2} \right)  \right)^{-1},
\end{align*}
where we have used \eqref{reso sum} and \eqref{eror}.
Taking $a=\frac{\ln q}{2 \mathcal{Z}(2) } \left (  \ln \left (   \frac{(1+b)^2}{1+b^2} \right)  \right)^{-1}-\xi$  with arbitrarily small $\xi>0$, and combining \eqref{Log L=}, \eqref{max Sp}, and \eqref{s2/s1}, we obtain the first part of Theorem~\ref{thm inside critical strip}.

For the proof of the second part of Theorem~\ref{thm inside critical strip}, we choose 
\begin{equation*}
	a
	=
	\frac{1}{\alpha_q(b)}
	-\xi+\frac{1}{ \sqrt{\log \mathbf{N}} },
	\label{eq:a-choice}
\end{equation*}
with  $1/ \sqrt{\log \mathbf{N}} <\xi$. 
Therefore,
\begin{align}
	a^{1-\sigma}
	&=
	\alpha_q(b)^{\sigma-1} (1-\xi\alpha_q(b))^{1-\sigma}
	\left(1+\frac{\alpha_q(b)}{(1-\xi\alpha_q(b)) \sqrt{\log \mathbf{N}} }
	\right)^{1-\sigma}\nonumber\\
	& \geq \alpha_q(b)^{\sigma-1} (1-\xi\alpha_q(b))^{1-\sigma} \left(1+\frac{(1-\sigma)\alpha_q(b)}{2(1-\xi\alpha_q(b)) \sqrt{\log \mathbf{N}} }
	\right).
	\label{eq:a-power}
\end{align}

Substituting \eqref{eq:a-power} into
\eqref{s2/s1}, we obtain
\begin{align}
	\frac{\mathcal S_2(\sigma)}
	{\mathcal S_1(\sigma)}
	\geq\;&
	bq^{\sigma-1}\mathcal Z(2-\sigma)
	\alpha_q(b)^{\sigma-1}
	\left(
	1-\xi\alpha_q(b)
	\right)^{1-\sigma} 
	\notag\\
	&\times \left(1+\frac{(1-\sigma)\alpha_q(b)}{2(1-\xi\alpha_q(b)) \sqrt{\log \mathbf{N}} }
	\right)
	(1+o(1))
	\frac{\mathbf N^{1-\sigma}}
	{(\log\mathbf N)^\sigma} \notag\\
	\geq 	&  \left( \tau_{\xi,\mathbf N}+     \frac{b q^{1-\sigma} \mathcal{Z}(2-\sigma)(1-\sigma)\alpha_q(b)^{\sigma} \mathbf N^{1-\sigma} }{4(1-\xi\alpha_q(b))  ( \log \mathbf{N})^{\sigma+1/2}  }   \right)
	\label{eq:ratio-delta}.
\end{align}
By using \eqref{Log L=}, we  get 
\begin{align}\label{sum log L}
	\sum_{P\in\mathcal P_{\mathbf N}}
	\ln L(\sigma,\chi_P)R_P^2  & \geq
	\left( \tau_{\xi,\mathbf N}+     \frac{b q^{1-\sigma} \mathcal{Z}(2-\sigma)(1-\sigma)\alpha_q(b)^{\sigma} \mathbf N^{1-\sigma} }{4(1-\xi\alpha_q(b))  ( \log \mathbf{N})^{\sigma+1/2}  }   \right)   \sum_{P\in\mathcal P_{\mathbf N}}R_P^2.
\end{align}
We divide the sum on the left-hand side into two parts, according to whether $\ln L(\sigma,\chi_P)$ is less than or greater than $\tau_{\xi,\mathbf N}$. Thus,
\begin{align*}
	\sum_{P\in\mathcal P_{\mathbf N}}
	\ln L(\sigma,\chi_P)R_P^2 &
	=
	\sum_{\substack{P\in\mathcal P_{\mathbf N}
			\\	\ln L(\sigma,\chi_P)<\tau_{\xi,\mathbf N}}}
	\ln L(\sigma,\chi_P)R_P^2
	+
	\sum_{\substack{P\in\mathcal P_{\mathbf N}\\
			\ln L(\sigma,\chi_P)\geq\tau_{\xi,\mathbf N}}}
	\ln L(\sigma,\chi_P)R_P^2
	\notag \\
	& <
	\tau_{\xi,\mathbf N}
	\sum_{P\in\mathcal P_{\mathbf N}}R_P^2
	+
	\sum_{\substack{P\in\mathcal P_{\mathbf N}\\
			\ln L(\sigma,\chi_P)\geq\tau_{\xi,\mathbf N}}}
	\ln L(\sigma,\chi_P)R_P^2.
\end{align*}
Therefore, from \eqref{sum log L}, we obtain
\begin{align}
	\sum_{\substack{P\in\mathcal P_{\mathbf N}\\
			\ln L(\sigma,\chi_P)\geq\tau_{\xi,\mathbf N}}}
	\ln L(\sigma,\chi_P)R_P^2  & >       \left ( \frac{b q^{1-\sigma} \mathcal{Z}(2-\sigma)(1-\sigma)\alpha_q(b)^{\sigma} \mathbf N^{1-\sigma} }{4(1-\xi\alpha_q(b))  ( \log \mathbf{N})^{\sigma+1/2}  }   \right)   \sum_{P\in\mathcal P_{\mathbf N}}R_P^2 \nonumber\\
	& \geq e^{ o(\mathbf{N})}(1+o(1))
	|\mathcal P_{\mathbf N}|\lambda(b)^{\Pi_q(Y)}, \label{log L >=}
\end{align}
by using    
\begin{equation*}
	\mathcal S_2(\sigma)
	=
	\sum_{P\in\mathcal P_{\mathbf N}}R_P^2
	=
	(1+o(1))
	|\mathcal P_{\mathbf N}|\lambda(b)^{\Pi_q(Y)}.
\end{equation*}
Recall that 
\[
R_P
=
\prod_{\deg Q\leq Y}
(1-b\chi_P(Q))^{-1}.
\]
Hence
\begin{equation}
	R_P^2\leq(1-b)^{-2\Pi_q(Y)}.
	\label{eq:Rmax}
\end{equation}
Moreover, from \eqref{Log L=},
\begin{align*}
	|\ln L(\sigma,\chi_P)|
	&\ll
	\sum_{\deg Q\leq Y_o}
	\frac{1}{|Q|^\sigma}+1\ll
	\frac{q^{(1-\sigma)Y_o}}{Y_o}+1.
\end{align*}
Since $Y_o= \frac{1+\delta}{\sigma-1/2} \log \mathbf{N}=O( \log \mathbf{N} )=o(\mathbf{N})$,  it follows that 
\begin{equation}
	\max_{P\in\mathcal P_{\mathbf N}}
	|\ln L(\sigma,\chi_P)|
	\leq e ^ {o(\mathbf N)}.
	\label{eq:Lmax}
\end{equation}
Furthermore,	
\begin{align}
	\sum_{\substack{P\in\mathcal P_{\mathbf N}\\
			\ln L(\sigma,\chi_P)\geq\tau_{\xi,\mathbf N}}}
	\ln L(\sigma,\chi_P)R_P^2  \, \leq \, 
	\max_{P\in\mathcal P_{\mathbf N}}
	|\ln L(\sigma,\chi_P)|     	\max_{P\in\mathcal P_{\mathbf N}} R_P^2 \,\,
	\Phi_{\mathbf N}(\sigma,\xi)
	|\mathcal P_{\mathbf N}|.
	\label{eq:large-upper}
\end{align} 
Consequently, using \eqref{eq:Rmax} and \eqref{eq:Lmax} in \eqref{eq:large-upper}, we obtain
\begin{align}
	\sum_{\substack{P\in\mathcal P_{\mathbf N}\\
			\ln L(\sigma,\chi_P)\geq\tau_{\xi,\mathbf N}}}
	\ln L(\sigma,\chi_P)R_P^2 \, \leq \, 
	|\mathcal P_{\mathbf N}|
	\Phi_{\mathbf N}(\sigma,\xi)
	e^{o(\mathbf N)}
	(1-b)^{-2 \Pi_q(Y)}.
	\label{eq:large-upper}
\end{align}
Comparing \eqref{log L >=} with \eqref{eq:large-upper}, we get
\begin{align}\label{phi> exp(-piQY)}
	\Phi_{\mathbf N}(\sigma,\xi)
	\geq\;&
	e^{ o(\mathbf{N})}\lambda(b)^{\Pi_q(Y)} (1-b)^{2 \Pi_q(Y)} =  \exp \left( {o(\mathbf{N}) - \Pi_q(Y) \ln\left(\frac{(1+b)^2}{1+b^2}\right)  }\right).
\end{align}
From our choice of $Y$ and Lemma \ref{produc of poly}, it follows that
\[ 
\Pi_q(Y)=\mathcal{Z}(2) (1+o(1)) \frac{q^Y}{Y} \leq  \left(a\mathcal Z(2)+o(1)\right)\mathbf N.
\]
Hence, using our choice of $a$, \eqref{phi> exp(-piQY)} implies that
\begin{align*}
	\Phi_{\mathbf N}(\sigma,\xi)
	&\geq
	\exp\left( 
	-a\mathcal Z(2) \mathbf N  \ln\left(\frac{(1+b)^2}{1+b^2}\right) 
	+o(\mathbf N) \right) \\
	& = \exp\left( 
	-\frac{a\alpha_q(b)}{2}
	\mathbf N\ln q
	+o(\mathbf N) \right) = q^{-\frac{\mathbf N}{2}
		(1-\xi\alpha_q(b))
		+o(\mathbf N)},
\end{align*}
This completes the proof.

\section{Acknowledgments}

The author is supported by the Institute Fellowship from IIT Kharagpur, India.
The author is grateful to Dr.~Kamalakshya Mahatab for his valuable guidance and support,
and also acknowledges financial support from the ARG-MATRICS Grant
(Grant No.~ANRF\slash ARGM\slash 2025\slash 002540\slash MTR)
awarded to Dr.~Kamalakshya Mahatab.

\end{document}